\newcounter{mnote}
\newcounter{mnoteE}
\numberwithin{equation}{section}
\begin{document}
\title [A determining form for damped driven NLS]{A determining form for the damped driven Nonlinear Schr\"{o}dinger Equation- Fourier modes case}
\date{\today}
\subjclass[2010]{35Q55, 34G20,37L05, 37L25}
\keywords{nonlinear Schr\"{o}dinger equation, determining forms, determining modes, determining nodes, inertial manifolds.}
%
\author{Michael S. Jolly}
\address[Michael S. Jolly]{Department of Mathematics\\
                Indiana University, Bloomington\\
        Bloomington , IN 47405, USA}
\email[Michael S. Jolly]{msjolly@indiana.edu}
\author{Tural Sadigov}
\address[Tural Sadigov]{Department of Mathematics\\
                Indiana University, Bloomington\\
        Bloomington , IN 47405, USA}
\email[Tural Sadigov]{tsadigov@indiana.edu}
\author{Edriss S. Titi}
\address[Edriss S. Titi]{The Department of Computer Science and Applied Mathematics\\
The Weizmann Institute of Science, Rehovot 76100, Israel. ALSO: Department of Mathematics and
Department of Mechanical and Aerospace Engineering, University of California, Irvine, California, 92697, USA.}
\email[Edriss S. Titi]{etiti@math.uci.edu and edriss.titi@weizmann.ac.il}

\maketitle{}
\begin{abstract}
In this paper we show that the global attractor of the 1D damped, driven, nonlinear Schr\"{o}dinger equation (NLS) is embedded in the long-time dynamics of a determining form. The determining form is an ordinary differential equation in a space of trajectories $X=C_b^1(\mathbb{R}, P_mH^2)$ where $P_m$ is the $L^2$-projector onto the span of the first $m$ Fourier modes. There is a one-to-one identification with the trajectories in the global attractor of the NLS and the steady states of the determining form. We also give an improved estimate for the number of the determining modes.   
\end{abstract}

\section{Introduction}

The damped, driven, nonlinear Schr\"{o}dinger equation (NLS), $\eqref{NLSmain}$,  has been derived in various areas of physics, and widely investigated (see e.g. \cite{Physics} and references therein). In plasma physics, the NLS is a model for the propagation of an intense laser beam through a nonlinear medium (see e.g. \cite{Fibich}). In this model the unknown function $u(x,t)$ is the electrical field amplitude, $t$ is the distance in the direction of the propagation and $x$ is a transverse spatial variable. Absorption of the electromagnetic wave by the medium is accounted for by linear damping.  The NLS also describes the single particle properties of Bose-Einstein condensate (BEC) (see e.g. \cite{BEC}). In case of the BEC, this model is referred to as the Gross-Pitaevski equation. The function $u(x,t)$ describes the macroscopic wave function of the condensate; $t$ is time and $x$ is a spatial variable. A damping term is added to account for inelastic collisions which occur when the particle density is very large. A constant damping rate (absorption) $\gamma$, describes inelastic collisions with the background gas. We note that the NLS is also investigated in deep-water phenomena and in the collapse of Langmuir waves (see e.g. \cite{Fibich}).

The undamped, unforced case has been extensively studied in modern mathematical physics (see e.g. \cite{Bourgain}).  Well-posedness of  $\eqref{NLSmain}$, for nonzero forcing and $\gamma>0$ is established by Ghidaglia in \cite{Gh}, where, under the assumption that the force is either time independent or time periodic, it is also proved that there exists a weak attractor in the Sobolev spaces $H^1$ and $H^2$. Later, it is proved in  \cite{Wang} that this weak attractor is in fact a global attractor in $H^2$ in the strong sense. In \cite{Goubet}, assuming the force is smooth enough and periodic in spatial variable, Goubet proved that the global attractor $\mathcal{A}$ is smooth, meaning it is included and bounded in $H^k$, for any $k\geq 1$. This implies that $\mathcal{A}$ is in $C^\infty$ due to classical Sobolev embeddings theorems. Finally in \cite{OliverTiti}, it is proved that $\mathcal{A}$ is in fact contained in a subclass of the space of real analytical functions provided that the forcing term is real analytic. The long-time dynamics of the damped, driven NLS is entirely contained in the \emph{gobal attractor} $\mathcal{A}$, a compact finite-dimensional set within the infinite-dimensional phase space $H^k$ for any $k\geq 1$ (see \cite{Goubet}). It is shown in \cite{OliverTiti}, for real analytical forcing, that the solutions on the attractor of the NLS are determined uniquely by their nodal values on only two sufficiently close nodes. 

The finite dimensionality for the NLS can be stated more explicitly. It is also known that solutions of the NLS in $\mathcal{A}$ are determined by the asymptotic behavior of a sufficient finite number of Fourier modes (see \cite{Goubet}, \cite{Hale}). To be precise, this means that if two complete trajectories in the global attractor coincide under the projection $P_m$ onto a sufficiently large number, $m$, of low modes, then they are the same trajectory. These $m$-modes are called \emph{determining modes} (see \cite{modes}). This notion of determining modes was used in \cite{Form1} to find a \emph{determining form} for the 2D Navier-Stokes equations (NSE). In \cite{Form1},  the determining form is an ordinary differential equation in an \emph{infinite dimensional} Banach space $X=C_b(\mathbb{R}, P_mH)$, governing the evolution of trajectories. Here $H$ is a Hilbert space which is a natural phase space for the 2D NSE (see  \cite{ConFoias}, \cite{TemamDyn}). The trajectories in the attractor of the 2D NSE are identified with traveling wave solutions of the determining form in  \cite{Form1}. 

A determining form of a different sort was found in  \cite{Form2} for the 2D NSE. It is based on\emph{ data assimilation by feedback control} through a general interpolant operator. It is general in the sense that it can be induced by a variety of determining parameters such as determining modes, nodal values and finite volumes. The steady states of this determining form are precisely the trajectories in the global attractor of the 2D NSE.

Motivation for the determining form comes from the notion of an inertial form. An inertial form for a partial differential equation is an ordinary differential equation restricted to a \emph{finite} dimensional manifold called an \emph{inertial manifold}. We note that it is not known if there is an inertial manifold for 2D NSE. Nor is it known whether there is such a manifold for the NLS. In this paper we adapt the approach in \cite{Form2} for the NLS. While the feedback control approach potentially allows for a variety of interpolant operators, our analysis for the NLS is restricted the case of Fourier modes.  This is done in order to close the {\it a priori} estimates needed in $L^2$, $H^1$, $H^2$, even though there is no dissipative term to absorb the highest derivative. The key step to get a determining form is defining and extending the map $W$ which recovers the high frequency components of a trajectory on the global attractor from the low frequency components. This is done by adding a feedback control term to the NLS (see \cite{AzOT}, \cite{AzT} for feedback controls). The determining form in \cite{Form1} has the map $W$ inserted in the bilinear term of the NSE. The feedback control approach allows us to avoid doing this for the cubic nonlinear term in the NLS. The idea of the feedback control approach is that if we know the $P_m$ projection of the solution of the damped driven NLS on the attractor, we can feed this information into the system to construct the complete solution. It is worth pointing out that this equation is dispersive and merely damped, not strongly dissipative. The analysis used here involves compound functionals motivated by the Hamiltonian structure of the Schr\"{o}dinger equation. 

Section 2 introduces the NLS and special notation. The statements of the main results are mentioned in section  3. A priori estimates are done in section 4. Section 5 contains the main results that we need to obtain a determining form. Section 6 introduces the determining form. Finally, in section  7, we give a different proof of the determining modes property of the NLS through a `reverse' Poincar\'e type inequality. This approach produces an improved estimate for the number of the determining modes for the NLS.

\section{Preliminaries}
We consider the 1D damped, driven, nonlinear Schr\"{o}dinger equation subject to periodic boundary conditions
\begin{align}
&iu_t+u_{xx}+|u|^2u+i\gamma u=f \label{NLSmain} \\
&u(t, x)=u(t, x+L), \qquad \forall \text{ } (t, x)\in \mathbb{R}\times \mathbb{R} \notag \\
&u(0, x)=u_0(x) \notag
\end{align}
where $0<L<\infty$, $0<\gamma$ and $f\neq 0$. We assume that $f$ is time independent, and $f\in L_{per}^2$. Let $0\leq k< \infty$. We denote by $H^k[0, L]$ (or simply $H^k$) Sobolev space of order $k$,
$$H^k[0, L]:=\left \{ f\in L^2[0, L]: \alpha \leq k,  D^\alpha f \text{ exists and } D^\alpha f\in L^2[0, L] \right \},$$
and by $H_{per}^k$, the subspace of $H^k$ consisting of functions which is periodic in $x$, with period $L$. Note that $H_{per}^0[0, L]= L_{per}^2[0, L]$. We assume that $u_0(x)\in H_{per}^2$.  It has been proven in \cite{Gh} that $\eqref{NLSmain}$ has a unique solution $u(x,t)$ such that the mapping 
$$u_0\rightarrow u(t)$$
is continuous on $H^1$, with $u\in L^\infty(\mathbb{R}; H^1)$. The global attractor is the maximal compact invariant set under the solution operator $S(t, \cdot)$. Alternatively, it can be defined  as $\mathcal{A}= \cap_{u_0\in \mathcal{B}}S(t, u_0)$ where $\mathcal{B}$ is an absorbing ball (see e.g. \cite{TemamDyn}). Throughout the paper, we will use the notation 
$$\|u\|^2:=\|u\|_{L^2}^2,$$ 
$$\|u\|_{H^1}^2:=\|u\|_{L^2}^2+\|u_x\|_{L^2}^2,$$ and  
$$\|u\|_{H^2}^2:=\|u\|_{L^2}^2+\|u_{xx}\|_{L^2}^2.$$

To make the flow of the analysis more transparent, we adopt some specialized notation for certain bounding expressions. Bounding expressions that depend on $\gamma$, $f$ (and $\mu$, see $\eqref{main}$) will be denoted by capital letters $\mathcal{R}$ and $K$ with specific indices. The bounding expressions $\mathcal{R}$ with indices $0$, $1$ and $2$  are $L^2$, $H^1$ and $H^2$ bounds, respectively,  for the solution of $\eqref{main}$. Those bounding expressions accented with $\tilde{}$ and $\tilde{\tilde{}}$ will be subsequently improved. As they are improved once, we remove a $\tilde{}$. For example, $\tilde{\tilde{K}}_2$ will be improved once, and we use $\tilde{K}_2$ for the improvement. Then we improve $\tilde{K}_2$ again to get $K_2$ which is the final improvement. Universal constants will be denoted by $c$ and updated throughout the paper. We denote by $P_m$ the $L^2$-projection onto the space $H_m$, where  
\begin{align}
H_m:= \hbox{span}\{e^{ikx\frac{2\pi}{L}}: |k|\leq m\}. \label{Hm}
\end{align}
\section{The Statements Of The Main Results}
We define the following norms,
\begin{align}
&|v|_X= \sup_{s\in \mathbb{R}} \|v(s)\|+ \sup_{s\in \mathbb{R}} \|v_s(s)\| \notag \\
&|v|_{X,0}=\sup_{s\in \mathbb{R}} \|v(s)\|, \label{X0}\\
&|w|_Y= \sup_{s\in \mathbb{R}} \|w(s)\|_{H^2}, \notag
\end{align}
and the following Banach spaces,
\begin{align} 
X=C_{b}^{1}(\mathbb{R}, P_mH^2)=& \{ v:\mathbb{R} \rightarrow  P_mH^2 : v(s) \text{ is } 
\notag \label{X} \\
& \text{ continuous } \forall s\in \mathbb{R} \text{ and }|v|_X<\infty \}, 
\end{align}
\begin{align} 
Y=C_{b}(\mathbb{R}, H^2)= &\{w: \mathbb{R}\rightarrow H^2 :  v(s) \text{ is }\notag \\ 
&\text{ continuous }\forall s\in \mathbb{R} \text{ and } |w|_Y<\infty  \}.\label{Y}
\end{align}
Let $v\in X$, and consider the equation 
\begin{equation}
iw_s+w_{xx}+|w|^2w+i\gamma w=f- i\mu [P_m(w)-v],  \label{main}
\end{equation}
subject to periodic boundary condition
\begin{align*}
w(s, x)=w(s, x+L), \qquad \forall (s, x)\in \mathbb{R}\times \mathbb{R}. 
\end{align*}
We assume that $f\in L_{per}^2$. We first state a new estimate
for the number of determining modes. 
\newtheorem{modes}{Theorem}[section]
\begin{modes}
Assume 
\begin{equation}
m\geq \frac{L}{2\pi}K_{11}-1, \notag \label{numberofmodes}
\end{equation}
 where $K_{11}$ is defined in \eqref{K15}. Then the Fourier projection $P_m$ of $L^2$ onto the space $H_m$, where $H_m$ is defined in $\eqref{Hm}$, is determining for $\eqref{NLSmain}$ i.e, for all $u_1(\cdot), u_2(\cdot)\subset \mathcal{A}$, $P_mu_1(t)=P_mu_2(t), \text{ } \text{for all} \text{ } t\in \mathbb{R}$ implies that $u_1(t)=u_2(t),  \text{ } \text{for all} \text{ } t\in \mathbb{R}$.
\end{modes}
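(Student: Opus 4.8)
The plan is to reduce the statement to two competing Poincar\'e-type inequalities for the difference $w := u_1-u_2$ of the two complete trajectories in $\mathcal{A}$. Since $P_mu_1(t)=P_mu_2(t)$ for all $t$, we have $P_mw\equiv 0$, so $w(t)$ lies in the range of $Q_m:=I-P_m$ for every $t$. For such high-mode functions the ordinary Poincar\'e inequality reads
\begin{equation}
\|w_x\|\ge \mu_{m+1}\|w\|,\qquad \mu_{m+1}:=(m+1)\tfrac{2\pi}{L}, \notag
\end{equation}
because the smallest frequency present in $w$ is $\mu_{m+1}$. The core of the argument is a matching \emph{reverse} inequality
\begin{equation}
\|w_x(t)\|\le K_{11}\,\|w(t)\|,\qquad t\in\mathbb{R}, \label{revpoincare}
\end{equation}
which should hold precisely because $w$ is the difference of two orbits on the smooth, bounded attractor. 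Granting \eqref{revpoincare}, the two inequalities combine pointwise to give $\mu_{m+1}\|w(t)\|\le K_{11}\|w(t)\|$, and the hypothesis $m\ge \frac{L}{2\pi}K_{11}-1$ is exactly $\mu_{m+1}\ge K_{11}$; hence $\|w(t)\|=0$ for every $t$, which is the claim. Note this route delivers the conclusion pointwise in time, with no backward-uniqueness or Gr\"onwall step.

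Everything therefore rests on \eqref{revpoincare}. Subtracting the two copies of \eqref{NLSmain} gives
\begin{equation}
iw_t+w_{xx}+N+i\gamma w=0,\qquad N:=|u_1|^2u_1-|u_2|^2u_2, \notag
\end{equation}
and I would record the two energy identities obtained by pairing this equation in $L^2$ against $w$ and against $w_{xx}$ and taking imaginary parts:
\begin{align}
&\tfrac12\tfrac{d}{dt}\|w\|^2+\gamma\|w\|^2=-\,\mathrm{Im}\!\int_0^L N\,\overline{w}\,dx, \notag \\
&\tfrac12\tfrac{d}{dt}\|w_x\|^2+\gamma\|w_x\|^2=\mathrm{Im}\!\int_0^L N\,\overline{w_{xx}}\,dx. \notag
\end{align}
The structural point, dictated by the Hamiltonian character of the equation, is that the dispersive terms $\int w_{xx}\overline{w}$ and $\|w_{xx}\|^2$ are real and vanish under $\mathrm{Im}$, and, using $N=|u_1|^2w+u_1u_2\overline{w}+|u_2|^2w$, the first right-hand side collapses to $-\mathrm{Im}\int u_1u_2\overline{w}^{\,2}$ since the $|u_j|^2|w|^2$ terms are real; the analogous cancellation removes the most dangerous cubic contribution $\int|u_1|^2|w_x|^2$ from the second identity.

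I would then form the compound functional $\Phi(t):=\|w_x(t)\|^2-K_{11}^2\|w(t)\|^2$, for which the two identities give the exact relation $\frac{d}{dt}\Phi=-2\gamma\Phi+R$, where $R:=2\,\mathrm{Im}\int N\overline{w_{xx}}+2K_{11}^2\,\mathrm{Im}\int N\overline{w}$ collects the surviving phase-carrying cubic terms; each of these is estimated using the uniform $L^\infty$, $H^1$ and $H^2$ bounds for $u_1,u_2$ on $\mathcal{A}$ from Section~4, and this bookkeeping is what fixes the value of $K_{11}$. To close I would use that $\Phi$ is uniformly bounded above on $\mathbb{R}$, since $\|w_x(t)\|^2\le(\|u_1\|_{H^1}+\|u_2\|_{H^1})^2$ is controlled on $\mathcal{A}$. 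A forward-invariance/contradiction argument then rules out $\Phi>0$: with $K_{11}$ chosen from the cubic estimates so that the remainder $R$ overcomes the damping $-2\gamma\Phi$ in the region $\{\Phi>0\}$, any point with $\Phi(t_\ast)>0$ would force $\Phi$ to grow without bound forward in time, contradicting the uniform upper bound. Hence $\Phi\le 0$ for all $t$, which is \eqref{revpoincare}.

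The main obstacle I anticipate is the second energy identity: the term $\mathrm{Im}\int N\overline{w_{xx}}=-\mathrm{Im}\int N_x\overline{w_x}$ loads a derivative onto the cubic nonlinearity with no dissipative term available to absorb it, so the whole scheme depends on the cancellation of its real part and on estimating the remaining terms, which carry factors of $\|u_{j,x}\|_{L^\infty}$ and hence the $H^2$ attractor bound, sharply enough that the resulting $K_{11}$ is the one defined in \eqref{K15}. The genuinely delicate point is that closing the inequality for $\Phi$ requires controlling the \emph{sign and size} of $R$ in the region $\{\Phi>0\}$, rather than merely bounding $|R|$; arranging the Hamiltonian-motivated compound functional so that this works is the crux, and is exactly what distinguishes this sharpened count from the cruder bound obtainable from the $L^2$ identity alone.
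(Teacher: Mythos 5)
Your overall architecture --- a reverse Poincar\'e-type inequality $\|w_x\|\lesssim K_{11}\|w\|$ played against the ordinary Poincar\'e inequality $\|w_x\|\ge (m+1)\frac{2\pi}{L}\|w\|$ for the purely high-mode difference $w=Q_mw$ --- is exactly the paper's strategy in Section~7. But there are genuine gaps in how you propose to prove the reverse inequality. First, the version you aim for is pointwise in time, $\|w_x(t)\|\le K_{11}\|w(t)\|$, whereas what the paper actually establishes (and what suffices) is the weaker $\|w_x(t)\|\le K_{11}\sup_{s\in\mathbb{R}}\|w(s)\|$, obtained by a Gronwall argument from $s=-\infty$ using boundedness of the compound functional on the whole line; your sign-based forward-invariance argument for ruling out $\Phi>0$ cannot work, because the remainder $R$ has no definite sign --- you can only bound $|R|$, and $\frac{d}{dt}\Phi=-2\gamma\Phi+R$ with $|R|$ bounded gives an upper bound on $\Phi$ by Gronwall, not $\Phi\le 0$. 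Second, and more seriously, your second energy identity does not close. Writing $\mathrm{Im}\int N\overline{w_{xx}}=-\mathrm{Im}\int N_x\overline{w_x}$ with $N=|w|^2w+2\mathrm{Re}(u_1\bar u_2)w+u_1u_2\bar w$, the cancellation you invoke kills only the piece $2\mathrm{Re}(u_1\bar u_2)|w_x|^2$; the piece $u_1u_2\,\overline{w_x}^{\,2}$ survives under $\mathrm{Im}$ and contributes a term of size $\|u_1\|_\infty\|u_2\|_\infty\|w_x\|^2$, which cannot be absorbed by the damping $\gamma\|w_x\|^2$ since $\gamma$ is small. This is precisely the obstacle you flag at the end but do not resolve.

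The paper's resolution is to avoid pairing with $\overline{w_{xx}}$ altogether: it multiplies the difference equation by $\overline{\delta_s}$ and tracks the full Hamiltonian-motivated functional
\begin{equation*}
\Phi(s)=\|\delta_x\|^2-\tfrac12\int_0^L|\delta|^4-2\int_0^L\mathrm{Re}(u\bar{\tilde u})|\delta|^2-\mathrm{Re}\int_0^L u\tilde u\,\bar\delta^2,
\end{equation*}
combined with the identities obtained by pairing with $\bar\delta$ (to generate the $2\gamma\Phi$ term). Because the quartic interaction terms are built into $\Phi$, differentiating in $s$ moves the time derivative onto the attractor trajectories, and the only surviving source terms are of the form $\int\mathrm{Re}(u\bar{\tilde u}_s)|\delta|^2$, $\int(u\tilde u)_s\bar\delta^2$ and $\gamma\int|\delta|^4$ --- all controlled by $\|\delta\|^2$ times attractor bounds ($\mathcal{R}_\infty^0$, $\mathcal{R}'^{\,0}$), with $\|\delta_x\|^2$ entering only through Young's inequality with absorbable coefficient $\gamma/3$. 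That is the step your scheme is missing, and it is what produces the specific constant $K_{11}$ in \eqref{K15}.
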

\newtheorem{Rkmode}[modes]{Remark}
\begin{Rkmode} 
By tracking the $\|f\|$ dependence of the bounds throughout the paper, we will show that a sufficient number of determining modes is of order $O(\|f\|^{10})$ as $\|f\| \to \infty$and $O(\gamma^{-12})$ as $\gamma \to 0$.   Following the analysis in Goubet \cite{Goubet}, one can show that a sufficient number of determining modes is of order $O(\gamma^{-12.5})$ as $\gamma \to 0$ and $O(\|f\|^{12})$ as $\|f\| \to \infty$. Thus the functionals in our analysis in $\eqref{phi}$, $\eqref{varphi}$, $\eqref{T}$ and $\eqref{Psi}$, which are naturally motivated by the Hamiltonian structure of the  Schr\"{o}dinger Equation, lead to sharper explicit estimates.  We also mention that the abstract treatment of determining modes by Hale and Raugel is applied to the damped, driven, nonlinear Schr\"odinger equation in \cite{Hale}, but that approach
does not provide estimates for the number of modes needed. 
\end{Rkmode}
The proof of Theorem 3.1 is given in section  7.  It is a byproduct of the proof for the following main result.
\newtheorem{2}[modes]{Theorem} \label{finalresult}
\begin{2}
Let $v\in X$, and $u^*$ be a steady state of equation $\eqref{NLSmain}$. Then we have the following: 
\begin{enumerate}
\item There exists a unique bounded solution $w\in Y$ of $\eqref{main}$, which defines a map $W: X\to Y$, such that $w=W(v)$. 
\item For sufficiently large $m$ and $\mu$, we have $W(P_mu)=u$, for any trajectory $u(s)$, $s\in \mathbb{R}$, in the global attractor of $\eqref{NLSmain}$.  
\item For sufficiently large $m$ and $\mu$, $P_mW: X \to X$ is a locally Lipschitz map.
\item The determining form
\begin{align*}
\frac{dv}{dt}= F(v)= -|v-P_mW(v)|_{X,0}^2 (v- P_mu^*),
\end{align*}
is an ordinary differential equation in a forward invariant set 
$$\{v\in X: |v-P_mu^*|_X< 3(\mathcal{R}_0^0+\mathcal{R'}^0)\},$$ 
and $F$ restricted to that set is globally Lipschitz. Moreover, $P_mu(s)$ is included in that set, for every $u(s)\in \mathcal{A}$. Here $\mathcal{R}_0^0=\mathcal{R}_0|_{\mu=0}$ and $\mathcal{R'}^0=\mathcal{R'}|_{\mu=0}$ are defined in $\eqref{Kinfty}$ and $\eqref{K11}$, respectively.  
\end{enumerate}
\end{2}
Theorem 3.$\ref{finalresult}$ is a combination of results to follow. Item (1) corresponds to Proposition 4.1 and first part of Theorem 5.3, item (2) is equivalent to Proposition 5.1, item (3) is analogous to the second part of Theorem 5.3, and finally item (4) is the summary of Theorem 6.1. The basic idea is to use the Galerkin method to establish a unique bounded solution to $\eqref{main}$, which defines the map $W$. This involves a series of a priori estimates undertaken in the next section. \\\\
Let  $n>m$. Note that $P_nv=v$, for every $v\in X$, and consider a Galerkin approximation of $\eqref{main}$, 
\begin{align}
i\partial_s w_n+\partial_x^2w_n+P_n(|w_n|^2w_n)+i\gamma w_n=P_nf-i\mu(P_mw-v), \label{Galerkin}
\end{align} 
subject to periodic boundary condition, and with the initial data
\begin{align}
w_n(-k,x)=0, \label{ic}
\end{align}
where $w_n\in H_n$, for some $k\in \mathbb{N}$. For simplicity we will drop the subscript $n$. Since $\eqref{Galerkin}$ is an ordinary differential equation with locally Lipschitz nonlinearity, it has a unique, bounded solution $w_n$ on a small interval $[-k, S^*)$, for some $S^*>-k$. We will show that $w_n$ exists globally on the interval $[-k, \infty)$ and is uniformly bounded with respect to $s\in [-k, \infty)$, $n$ and $k$, in the norms of the spaces $L^2$, $H^1$ and $H^2$.
\section{A Priori Estimates} 
\subsection{$L^2$ bound}
Let $[-k, S^*)$ be the maximal interval of existence for $\eqref{Galerkin}$. We will establish here global (in time) uniform in $n$ bounds which will imply, among other things, that $S^*=\infty$. Let us focus below on the interval $[-k, S^*)$. Multiply $\eqref{Galerkin}$ by $\bar{w}$, and integrate
\begin{align}
i\int_0^L w_s\bar{w}+\int_0^Lw_{xx}\bar{w}+\int_0^L|w|^4+&i\int_0^L\gamma |w|^2+i\mu \int_0^LP_m(w)\bar{w} \notag \\ 
&=\int_0^L f\bar{w}+i\mu \int_0^L v\bar{w}. \label{three}
\end{align}
Take the imaginary parts of both sides, and use the fact that $P_m$ is an orthogonal projection and $v\in X$, to get 
$$\frac{1}{2} \frac{d}{ds} \|w\|^2+\gamma \|w\|^2+\mu \|P_m(w)\|^2=Im \int_0^L f\bar{w}+ \mu  Re \int_0^L vP_m\bar{w}.$$
By using H\"{o}lder and Young inequalities, we have
\begin{align*}
\frac{1}{2} \frac{d}{ds} \|w\|^2+\gamma \|w\|^2+\mu \|P_mw\|^2& \leq \|f\| \|w\|+\mu \|v\| \|P_mw\| \notag \\ 
&\leq \frac{\|f\|^2}{2\gamma}+\frac{\gamma \|w\|^2}{2}\notag \\
&+\frac{\mu\|v\|^2}{2}+\frac{\mu\|P_mw\|^2}{2},
\end{align*}
and hence,
$$\frac{d}{ds} \|w\|^2+\gamma \|w\|^2 +\mu \|P_mw\|^2\leq \frac{\|f\|^2}{\gamma}+\mu \|v\|^2 ,$$
for all $s\in [-k, S^*)$. Since $v\in X$, we get
\begin{align}
\frac{d}{ds} \|w\|^2+\gamma \|w\|^2+\mu \|P_mw\|^2\leq \frac{\|f\|^2}{\gamma}+\mu |v|_X^2. \label{l2inequality}
\end{align}
Thus,
$$\frac{d}{ds} \|w\|^2+\gamma \|w\|^2\leq \frac{\|f\|^2}{\gamma}+\mu |v|_X^2.$$
Since, $w(-k,x)=0$, we deduce by Gronwall's lemma that, 
$$\|w(s)\|^2\leq\frac{\|f\|^2}{\gamma^2}+\frac{\mu}{\gamma} |v|_X^2, $$
for all $s\in [-k, S^*)$.  Since the right-hand side is constant, we conclude that $S^*=\infty$, and therefore 
$$\|w(s)\|\leq \frac{\|f\|}{\gamma}+\frac{\mu^{\frac{1}{2}}}{\gamma^{\frac{1}{2}}} |v|_X=: \tilde{\mathcal{R}}_0,$$
for all $s\in [-k, \infty)$, and as a result 
\begin{equation}
\sup_{s\geq -k}\|w(s)\| \leq \tilde{\mathcal{R}}_0. \label{l2bound} 
\end{equation}
Note that the constant $\tilde{\mathcal{R}}_0$ satisfies $\tilde{\mathcal{R}}_0=O(\mu^{\frac{1}{2}})$ as $\mu \to \infty$, and is independent of $k$ and $n$.
\subsection{$H^1$ bound}
Use again the fact that $P_m$ is an orthogonal projection in $\eqref{three}$, and take the real parts of equation $\eqref{three}$:
\begin{equation}
Im \int_0^L w\bar{w}_s= \|w_x\|^2- \|w\|_{L^4}^4+ Re \int_0^L f\bar{w}- \mu Im \int_0^L v\bar{w}, \label{IM}
\end{equation}
for all $s\in [-k, \infty)$. Now, multiply $\eqref{Galerkin}$ by $\bar{w}_s$ and integrate with respect to $x$ over $[0,L]$ to obtain
\begin{align*}
i\|w_s\|^2- \int_0^L w_x \bar{w}_{xs} + \int_0^L |w|^2w\bar{w}_s+&i\gamma \int_0^L w\bar{w}_s+i\mu\int_0^L P_m(w)\bar{w}_s\\
&=\int_0^L f \bar{w}_s+ i\mu\int_0^L v \bar{w}_s.
\end{align*}
Take the real part of the above equation to obtain
\begin{align}
\frac{d}{ds}\|w_x\|^2-\frac{1}{2}\frac{d}{ds}\|w\|_{L^4}^4+ 2\gamma Im \int_0^L w\bar{w}_s&+2\mu Im \int_0^L P_mwP_m\bar{w}_s=\notag \\
&-2 Re \int_0^L f \bar{w}_s+ 2\mu Im \int_0^L v \bar{w}_s.  \label{six} 
\end{align}
\\\\
To eliminate the third term in $\eqref{six}$, we will use $\eqref{IM}$. For the fourth term, we take the $P_m$ projection of equation $\eqref{main}$, then multiply by $P_m\bar{w}$, integrate, and take the real parts to find
\begin{align}
Im \int_0^L P_mwP_m\bar{w}_s- \|P_mw_x\|^2+& Re \int_0^L P_m(|w|^2w)P_m\bar{w}\notag \\
&= Re \int_0^L P_mfP_m\bar{w}- \mu Im \int_0^L  vP_m\bar{w}. \label{IMpr}
\end{align}
Now combine as follows: $(-2\gamma)\times \eqref{IM}+(-2\mu)\times \eqref{IMpr}+ \eqref{six}$ to get 
\begin{align} 
\frac{d\phi}{ds}+4\gamma \phi=& 2\gamma \|w_x\|^2+6\gamma Re\int_0^L f\bar{w} -6\mu \gamma Im\int_0^L v\bar{w} \notag \\
&+2\mu Re \int_0^L P_m(|w|^2w)P_m\bar{w}- 2\mu \|P_mw_x\|^2 \notag \\
&-2\mu Re \int_0^L P_mfP_m\bar{w}+2\mu^2 Im \int_0^L  vP_m\bar{w} \notag\\
&-2\mu Im \int_0^L  v_s \bar{w}, \label{phiequation}
\end{align}
where
\begin{align}
\phi(s)=\|w_x\|^2-\frac{1}{2}\|w\|_{L^4}^4+2Re \int_0^L f \bar{w}-2\mu Im \int_0^L  v \bar{w}. \label{phi}
\end{align}
Since $w(-k,x)=0$ for all $x$, we have that $w_x(-k,x)=0$ for all $x$, and thus $\phi(-k)=0$. \\
We estimate the right hand side of $\eqref{phiequation}$ using the H\"{o}lder, Young and Agmon inequalities and $\eqref{l2bound}$ as follows
$$2\gamma \|w_x\|^2\leq 2\gamma \|w\|_{H^1}^2,$$ 
$$6\gamma Re\int_0^L f\bar{w} \leq 6\gamma \|f\|\|w\| \leq 6\gamma \|f\| \tilde{\mathcal{R}}_0,$$
$$-6\mu \gamma Im\int_0^L  v\bar{w}\leq 6\mu \gamma  \|v\| \|w\|\leq 6\mu \gamma  |v|_X \tilde{\mathcal{R}}_0,$$
\begin{align*}
2\mu Re \int_0^L P_m(|w|^2w)P_m\bar{w}&= 2\mu Re \int_0^L |w|^2wP_m\bar{w}\\
&\leq 2\mu \|w\|_\infty^2 \|w\| \|P_mw\| \\
&\leq 2\mu (c\|w\| \|w\|_{H^1}) \|w\| \|P_mw\| \\
&\leq 2\mu c\|w\|^3 \|w\|_{H^1}  \\
&\leq \frac{\mu^2c^2 \|w\|^6}{\gamma }+ \gamma  \|w\|_{H^1}^2 \\
&\leq \frac{\mu^2 c^2\tilde{\mathcal{R}}_0^6}{\gamma }+ \gamma  \|w\|_{H^1}^2.
\end{align*}
Moreover, we have
$$-2\mu Re \int_0^L P_mfP_m\bar{w}\leq 2\mu \|P_mf\| \|P_mw\| \leq 2\mu \|f\| \|w\| \leq 2\mu \|f\| \tilde{\mathcal{R}}_0,$$
$$2\mu^2 Im \int_0^L  vP_m\bar{w} \leq 2\mu^2  \|v\| \|P_mw\| \leq 2\mu^2  |v|_X \tilde{\mathcal{R}}_0,$$
$$-2\mu Im \int_0^L  v_s \bar{w}\leq 2\mu  \|v_s\| \|w\|\leq 2\mu  |v|_X\tilde{\mathcal{R}}_0.$$
Putting together the above estimates, we obtain 
\begin{equation}
\frac{d\phi}{ds}+4\gamma \phi \leq 3\gamma \|w\|_{H^1}^2+ \tilde{\tilde{K}}_1, \label{sekiz}
\end{equation}
where
\begin{align*} 
\tilde{\tilde{K}}_1:=  6\gamma \|f\| \tilde{\mathcal{R}}_0+ 6\mu \gamma |v|_X \tilde{\mathcal{R}}_0+ \frac{\mu^2 c^2\tilde{\mathcal{R}}_0^6}{\gamma }+2\mu \|f\| \tilde{\mathcal{R}}_0+2\mu^2 |v|_X \tilde{\mathcal{R}}_0+2\mu |v|_X\tilde{\mathcal{R}}_0.
\end{align*}
Then by using the Agmon, H\"{o}lder, and Young inequalities in $\eqref{phi}$, we get
\begin{align}
\phi(s)&\geq \|w_x\|^2- \frac{1}{2}c \|w\|^3 \|w\|_{H^1}-2\|f\| \|w\|-2\mu  \|v\| \|w\| \notag \\
&=  \|w_x\|^2- \frac{1}{2}c \|w\|^3 \|w_x\|-\frac{1}{2} c\|w\|^4-2\|f\| \|w\|-2\mu  \|v\| \|w\| \notag \\
&\geq \|w_x\|^2- \frac{c^2\|w\|^6}{16\xi}-\xi \|w_x\|^2-\frac{1}{2}c \|w\|^4-2\|f\| \tilde{\mathcal{R}}_0-2\mu  |v|_X \tilde{\mathcal{R}}_0 \notag \\
&\geq (1-\xi)\|w_x\|^2+ (1-\xi)\|w\|^2 \notag \\ 
&\text{	}- \left [\frac{c^2\tilde{\mathcal{R}}_0^6}{16\xi}+\frac{1}{2}c \tilde{\mathcal{R}}_0^4+2\|f\| \tilde{\mathcal{R}}_0+2\mu  |v|_X \tilde{\mathcal{R}}_0+(1-\xi)\tilde{\mathcal{R}}_0^2\right ], \label{phiestimation}
\end{align}
where $0<\xi<1$, $\xi$ to be chosen later.
Thus, we have
$$\phi\geq (1-\xi)\|w\|_{H^1}^2-\tilde{\tilde{K}}_2,$$
where
$$\tilde{\tilde{K}}_2:=\frac{c^2\tilde{\mathcal{R}}_0^6}{16\xi}+\frac{1}{2} c\tilde{\mathcal{R}}_0^4+2\|f\| \tilde{\mathcal{R}}_0+2\mu |v|_X \tilde{\mathcal{R}}_0+(1-\xi)\tilde{\mathcal{R}}_0^2 ,$$
and hence 
\begin{equation}
\|w\|_{H^1}^2\leq \frac{1}{1-\xi}\phi + \frac{\tilde{\tilde{K}}_2}{1-\xi}. \label{dokuz}
\end{equation}
Use $\eqref{dokuz}$ in $\eqref{sekiz}$ to obtain
$$\frac{d\phi}{ds}+\frac{1-4\xi}{1-\xi}\gamma \phi \leq \tilde{\tilde{K}}_3,$$
where 
\begin{align*}
\tilde{\tilde{K}}_3:= \frac{3\gamma \tilde{\tilde{K}}_2}{1-\xi}+\tilde{\tilde{K}}_1.
\end{align*}
Choose $\xi=\frac{1}{7}$, so that 
\begin{align*}
\frac{1-4\xi}{1-\xi}= \frac{1}{2}. 
\end{align*}
So we have 
$$\frac{d\phi}{ds}+\frac{\gamma}{2}\phi \leq \tilde{\tilde{K}}_3.$$
Since $\phi(-k)=0$ , applying the Gronwall lemma, we have
$$\phi(w(s))\leq \tilde{\tilde{K}}_4,$$
for all $s\geq -k$, where $\tilde{\tilde{K}}_4:= 2\tilde{\tilde{K}}_3/\gamma$. From $\eqref{dokuz}$, we obtain 
$$\|w(s)\|_{H^1}^2\leq \frac{7}{6}\phi+\frac{7}{6} \tilde{\tilde{K}}_2\leq \frac{7}{6} (\tilde{\tilde{K}}_4+\tilde{\tilde{K}}_2),$$
for all $s\geq -k$. Therefore, we have $\tilde{\tilde{K}}_1=O(\mu^5), \tilde{\tilde{K}}_2=O(\mu^3), \tilde{\tilde{K}}_3=O(\mu^5), \tilde{\tilde{K}}_4=O(\mu^5)$ as $\mu \to \infty$.
Thus 
\begin{align}
\sup_{s\geq -k} \|w(s)\|_{H^1} \leq \tilde{\tilde{\mathcal{R}}}_1:= \sqrt{\frac{7}{6} (\tilde{\tilde{K}}_4+\tilde{\tilde{K}}_2)}=\sqrt{\frac{28}{3}\tilde{\tilde{K}}_2+\frac{7}{3\gamma}\tilde{\tilde{K}}_1}=O(\mu^\frac{5}{2}), \label{K61}
\end{align}
as $\mu \to \infty$. Note that $\tilde{\tilde{\mathcal{R}}}_1$ is independent of  $k$ and $n$. 
\subsection{Improved $L^2$ bound}
We now use the $H^1$-bound in $\eqref{K61}$ to obtain a better $L^2$-bound. We rewrite $\eqref{l2inequality}$ as
$$\frac{d}{ds} \|w\|^2+\gamma \|w\|^2+\mu \|w\|^2-\mu\|Q_mw\|^2\leq \frac{\|f\|^2}{\gamma}+\mu |v|_X^2,$$
where $Q_m= I- P_m$. Thus,
$$\frac{d}{ds} \|w\|^2+(\gamma+\mu )\|w\|^2\leq \frac{\|f\|^2}{\gamma}+\mu |v|_X^2+ \mu\|Q_mw\|^2 .$$
By the generalized Poincar\'e inequality we have 
\begin{align*} 
\|Q_mw\|^2\leq \frac{L^2}{4\pi^2}\frac{1}{(m+1)^2}\|w_x\|^2\leq\frac{L^2}{4\pi^2}\frac{\tilde{\tilde{\mathcal{R}}}_1^2}{(m+1)^2}. 
\end{align*}
If we choose $m$ large enough such that 
\begin{align}
\frac{\tilde{\tilde{\mathcal{R}}}_1^2}{(m+1)^2}\frac{L^2}{4\pi^2}<1, \label{E1} 
\end{align}
then
$$\frac{d}{ds} \|w\|^2+(\gamma+\mu )\|w\|^2\leq \frac{\|f\|^2}{\gamma}+\mu |v|_X^2+ \mu.$$
Now, we apply the Gronwall Lemma, using the fact that $\|w(-k)\|=0$, to obtain
\begin{align*}
\|w(s)\|^2\leq  \frac{\|f\|^2}{\gamma(\gamma+\mu)}+\frac{\mu |v|_X^2}{\gamma+\mu}+ \frac{\mu}{\gamma+\mu},
\end{align*}
for every $s\geq-k$, and hence, 
$$\sup_{s\geq-k}\|w(s)\|^2\leq   \frac{\|f\|^2}{\gamma(\gamma+\mu)}+\frac{\mu |v|_X^2}{\gamma+\mu}+ \frac{\mu}{\gamma+\mu}.$$
As a result, 
\begin{align*}
\sup_{s\geq-k}\|w(s)\|\leq \mathcal{R}_0, 
\end{align*}
where 
\begin{align}
\mathcal{R}_0:=\frac{\|f\|}{\sqrt{\gamma(\gamma+\mu)}}+\sqrt{\frac{\mu}{\gamma+\mu}}|v|_X+ \sqrt{\frac{\mu}{\gamma+\mu}}=O(\mu^0), \label{K1}
\end{align}
as $\mu \to \infty$. Note that $\mathcal{R}_0$ depends neither on $k$, nor on $n$. So by choosing $m$ large enough satisfying $\ref{E1}$, we get an $L^2$-bound which is uniform in $\mu$. Inserting $\mathcal{R}_0$ in place of $\tilde{\mathcal{R}}_0$ in the proof of the $H^1$-bound, yields new constants 
$$\tilde{K}_1=O(\mu^2), \tilde{K}_2=O(\mu), \tilde{K}_3=O(\mu^2),  \tilde{K}_4=O(\mu^2),$$ 
replacing $\tilde{\tilde{K}}_1$, $\tilde{\tilde{K}}_2$, $\tilde{\tilde{K}}_3$ and $\tilde{\tilde{K}}_4$ , respectively as $\mu \to \infty$. As as result,
$$\sup_{s\geq -k}\|w(s)\|_{H^1}\leq \tilde{\mathcal{R}}_1:=\sqrt{\frac{28}{3}\tilde{K}_2+\frac{7}{3\gamma}\tilde{K}_1}=O(\mu),$$
as $\mu \to \infty$, where  
\begin{align*} 
\tilde{K}_1:=  6\gamma \|f\| \mathcal{R}_0+ 6\mu \gamma |v|_X \mathcal{R}_0+ \frac{\mu^2 c^2\mathcal{R}_0^6}{\gamma }+2\mu \|f\| \mathcal{R}_0+2\mu^2 |v|_X \mathcal{R}_0+2\mu |v|_X\mathcal{R}_0, 
\end{align*}
and 
$$\tilde{K}_2:=\frac{c^2\mathcal{R}_0^6}{16\xi}+\frac{1}{2} c\mathcal{R}_0^4+2\|f\|\mathcal{R}_0+2\mu |v|_X \mathcal{R}_0+(1-\xi)\mathcal{R}_0^2.$$
\subsection{$H^2$ bound}
Multiply $\eqref{Galerkin}$ with $\bar{w}_{xxs}+\gamma \bar{w}_{xx}$, integrate, and take the real parts:
\begin{align*}
Re \int_0^L i w_s\bar{w}_{xxs}+&  \gamma Re\int_0^L i w_s \bar{w}_{xx}+ Re \int_0^L w_{xx}\bar{w}_{xxs}+ \gamma Re\int_0^L |w_{xx}|^2\\
+&Re\int_0^L |w|^2w\bar{w}_{xxs}+ \gamma Re\int_0^L |w|^2w\bar{w}_{xx}+ \gamma Re\int_0^L i w\bar{w}_{xxs}\\
+&\gamma^2 Re\int_0^L i w\bar{w}_{xx}+\mu Re\int_0^L i P_mw P_m\bar{w}_{xxs}+ \mu \gamma Re\int_0^L i P_mwP_m\bar{w}_{xx}\\
= &Re\int_0^L f\bar{w}_{xxs}+ \gamma Re\int_0^L f\bar{w}_{xx}+\mu Re\int_0^L i  v\bar{w}_{xxs}+\mu \gamma Re\int_0^L i  v \bar{w}_{xx}.
\end{align*}
We now estimate term by term, using integration by parts in most cases.
$$Re \int_0^L i w_s\bar{w}_{xxs}=- Re\int_0^L i |w_{xs}|^2=0,$$
$$\gamma Re\int_0^L i w_s \bar{w}_{xx}= - \gamma Re \int_0^L i w_{xs}\bar{w}_x=\gamma Im\int_0^L w_{xs}\bar{w}_x=-\gamma Im\int_0^L w_x\bar{w}_{xs},$$
$$Re \int_0^L w_{xx}\bar{w}_{xxs}=\frac{1}{2}\frac{d}{ds}\|w_{xx}\|^2,$$
$$\gamma Re\int_0^L |w_{xx}|^2= \gamma \|w_{xx}\|^2,$$
\begin{align*}
Re\int_0^L |w|^2w\bar{w}_{xxs}=- Re \int_0^L (w^2\bar{w})_x\bar{w}_{xs}=&- Re\int_0^L 2ww_x\bar{w}\bar{w}_{xs}- Re\int_0^L w^2\bar{w}_x\bar{w}_{xs}\\
= &-\int_0^L |w|^2\frac{d}{ds}|w_x|^2-\frac{1}{2} Re\int_0^L w^2\frac{d}{ds}\bar{w}_x^2,
\end{align*}
\begin{align*}
\gamma Re\int_0^L |w|^2w\bar{w}_{xx}=- \gamma Re \int_0^L (w^2\bar{w})_x\bar{w}_x=&- \gamma Re\int_0^L 2ww_x\bar{w}\bar{w_x}- \gamma Re\int_0^L w^2\bar{w}_x\bar{w}_x\\
=&-2\gamma \int_0^L |w|^2|w_x|^2-\gamma Re \int_0^L w^2\bar{w}_x^2,
\end{align*}
$$\gamma Re\int_0^L i w\bar{w}_{xxs}=- \gamma Re\int_0^L i w_x\bar{w}_{xs}=\gamma Im\int_0^L w_x\bar{w}_{xs},$$
$$\gamma^2 Re\int_0^L i w\bar{w}_{xx}=-\gamma^2 Re\int_0^L i |w_x|^2=0,$$
$$\mu Re\int_0^L i P_mw P_m\bar{w}_{xxs}=-\mu Re\int_0^L i P_mw_xP_m\bar{w}_{xs}= \mu Im\int_0^L P_mw_xP_m\bar{w}_{xs},$$
$$\mu \gamma Re\int_0^L i P_mwP_m\bar{w}_{xx}= -\mu \gamma Re\int_0^L i |P_mw_x|^2=0.$$
Now, we combine the above terms to get
\begin{align}
\frac{1}{2}\frac{d\varphi}{ds}+\gamma \varphi=& -\int_0^L 2Re(w\bar{w}_s|w_x|^2)-Re\int_0^L ww_s\bar{w}_x^2 \notag \\
&+\mu Im\int_0^L  v_s\bar{w}_{xx}+\gamma \mu Im\int_0^L  v\bar{w}_{xx}\notag \\
&- \gamma Re\int_0^L f\bar{w}_{xx}- \mu Im\int_0^L P_mw_xP_m\bar{w}_{xs} ,\label{on}
\end{align}
for all $s\geq -k$, where 
\begin{align}
\varphi(w):= \|w_{xx}\|^2&- 2\int_0^L |w|^2|w_x|^2- Re\int_0^L w^2\bar{w}_x^2 \notag \\
&-2 Re\int_0^L f\bar{w}_{xx}+2\mu Im\int_0^L  v\bar{w}_{xx}. \label{varphi}
\end{align}
Observe again that since $w(-k,x)=w_x(-k,x)=w_{xx}(-k,x)=0$, for all $x\in [0, L]$, we have $\varphi(-k)=0$. We write 
\begin{equation}
w_s=i w_{xx}+ h, \label{h}
\end{equation}
where $h:= i|w|^2w-\gamma w-\mu P_mw+\mu  v-i f$. Observe, thanks to Agmon's inequality, that 
\begin{align*}
\|h(s)\|&\leq c\|w\|^2\|w\|_{H^1}+(\gamma+\mu) \|w\|+\mu  \|v\|+\|f\|\\
&\leq c\mathcal{R}_0^2\tilde{\mathcal{R}}_1+ (\gamma+\mu) \mathcal{R}_0+ \mu   |v|_X+\|f\|,
\end{align*}
for all $s\geq -k$. We estimate each term on the right-hand side of $\eqref{on}$. We use $\eqref{h}$, as well as the Young, H\"{o}lder,   and Agmon inequalities to obtain\\
\begin{align*}
-\int_0^L 2Re(w\bar{w}_s|w_x|^2)&\leq 2\int_0^L |w||w_s||w_x|^2= 2\int_0^L |w| |i w_{xx}+ h | |w_x|^2\\
&\leq 2\int_0^L |w| |w_{xx}| |w_x|^2+2\int_0^L |w| |h| |w_x|^2\\
&\leq 2\|w\|_\infty \|w_x\|_\infty \int_0^L |w_{xx}||w_x|+2\|w\| \|h\| \|w_x\|_{\infty}^2\\
&\leq 2\|w\|_\infty \|w_x\|_\infty  \|w_{xx}\| \|w_x\|+ 2\|w\| \|h\| \|w_x\|_{\infty}^2\\
&\leq c\|w\|_\infty \|w_x\|^{\frac{3}{2}} \|w_{xx}\|^{\frac{3}{2}}+ c\|w\| \|h\| \|w_x\| \|w_{xx}\|\\
&\leq c(\|w\|^{\frac{1}{2}}\|w\|_{H^1}^2)\|w_{xx}\|^{\frac{3}{2}}+ c(\|w\| \|h\| \|w_x\|)\|w_{xx}\|\\
&\leq c(\mathcal{R}_0^{\frac{1}{2}}\tilde{\mathcal{R}}_1^2 )\|w_{xx}\|^{\frac{3}{2}}\\
&+ c\mathcal{R}_0\tilde{\mathcal{R}}_1(\mathcal{R}_0^2\tilde{\mathcal{R}}_1+ (\gamma+\mu) \mathcal{R}_0+ \mu   |v|_X+\|f\|)\|w_{xx}\|\\
&\leq \frac{\gamma}{20}\|w_{xx}\|^2+ \frac{c(\mathcal{R}_0^{\frac{1}{2}}\tilde{\mathcal{R}}_1^2 )^4}{\gamma^3}\\
&+ \frac{c(\mathcal{R}_0\tilde{\mathcal{R}}_1(\mathcal{R}_0^2\tilde{\mathcal{R}}_1+ (\gamma+\mu) \mathcal{R}_0+ \mu   |v|_X+\|f\|))^2}{\gamma}\\
&= \frac{\gamma}{20}\|w_{xx}\|^2+\tilde{K}_5, 
\end{align*}
where 
$$\tilde{K}_5:= \frac{c(\mathcal{R}_0^{\frac{1}{2}}\tilde{\mathcal{R}}_1^2 )^4}{\gamma^3}+\frac{c\{\mathcal{R}_0\tilde{\mathcal{R}}_1[\mathcal{R}_0^2\tilde{\mathcal{R}}_1+ (\gamma+\mu) \mathcal{R}_0+ \mu   |v|_X+\|f\|]\}^2}{\gamma}.$$
Similarly, we have
$$-Re\int_0^L ww_s\bar{w}_x^2\leq \frac{\gamma}{10}\|w_{xx}\|^2+ \tilde{K}_5,$$
and
$$\mu Im\int_0^L  v_s\bar{w}_{xx}\leq \mu \|v_{s}\|\|w_xx\|\leq \frac{\gamma}{20}\|w_{xx}\|^2+\frac{c\mu^2|v|_X^2}{\gamma},$$ 
$$ \gamma \mu Im\int_0^L  v\bar{w}_{xx}\leq \gamma \mu \|v\|\|w_{xx}\|\leq \frac{\gamma}{20}\|w_{xx}\|^2+c\gamma \mu^2|v|_X^2 ,$$
$$- \gamma Re\int_0^L f\bar{w}_{xx}\leq \gamma \|f\| \|w_{xx}\|\leq \frac{\gamma}{20}\|w_{xx}\|^2+c\gamma \|f\|^2.$$
For the term $-\mu Im\int_0^L P_mw_xP_m\bar{w}_{xs}$, we take the $P_m$ projection of equation $\eqref{Galerkin}$, multiply it with $\mu P_m\bar{w}_{xx}$, integrate, and take the real part to get 
\begin{align*}
-\mu Im\int_0^L P_mw_xP_m\bar{w}_{xs}=& -\mu \|P_mw_{xx}\|^2-\mu Re\int_0^L P_m(|w|^2w)P_m\bar{w}_{xx}\\
&+\mu Re\int_0^L fP_m\bar{w}_{xx}-\mu^2 Im\int_0^L  vP_m\bar{w}_{xx} \\
\leq& -\mu \|P_mw_{xx}\|^2+ \mu \|w\|_\infty^2\|w\|\|P_mw_{xx}\|+\\
&+\mu \|f\|\|P_mw_{xx}\|+ \mu^2 \|v\|\|P_mw_{xx}\|.
\end{align*}
Apply Young's inequality to eliminate $\mu \|P_mw_{xx}\|^2$, and then Agmon inequality, to get
$$-\mu Im\int_0^L P_mw_xP_m\bar{w}_{xs}\leq c\mu(\mathcal{R}_0^2\tilde{\mathcal{R}}_1)^2+c\mu \|f\|^2+c\mu^3|v|_X^2.$$
Combine the above terms to obtain
\begin{equation}
\frac{1}{2}\frac{d\varphi}{ds}+\gamma \varphi\leq \tilde{K}_6+ \frac{\gamma}{4}\|w_{xx}\|^2 \label{oniki},
\end{equation}
where 
\begin{align*}
\tilde{K}_6:=&   2\tilde{K}_5+ \frac{c\mu^2|v|_X^2}{\gamma}+ c\gamma \mu^2|v|_X^2 +c\gamma \|f\|^2+\\
&+c\mu(\mathcal{R}_0^2\tilde{\mathcal{R}}_1)^2+c\mu \|f\|^2+c\mu^3|v|_X^2.
\end{align*} 
Note that $\tilde{K}_6=O(\mu^8)$ as $\mu \to \infty$. From $\eqref{varphi}$ we obtain
\begin{equation}
\varphi \geq \frac{1}{2}\|w_{xx}\|^2- \tilde{K}_7 \label{onuc},
\end{equation}
where $\tilde{K}_7:= c(\mathcal{R}_0\tilde{\mathcal{R}}_1^3+\|f\|^2+\mu^2|v|_X^2)$. Use $\eqref{onuc}$ in $\eqref{oniki}$, to get
\begin{align*}
\frac{1}{2}\frac{d\varphi}{ds}+\gamma \varphi&\leq \tilde{K}_6+ \frac{\gamma}{2}(\varphi +\tilde{K}_7)\\
&= \frac{\gamma}{2}\varphi +(\frac{\gamma}{2}\tilde{K}_7+\tilde{K}_6)\\
&= \frac{\gamma}{2}\varphi+(\frac{\gamma}{2}\tilde{K}_7+\tilde{K}_6).
\end{align*}
Thus we have
$$\frac{d\varphi}{ds}+\gamma \varphi\leq \gamma\tilde{K}_7+2\tilde{K}_6.$$
Since $\varphi(-k)=0$,  we have, thanks to Gronwall's lemma,  $\varphi(s)\leq \tilde{K}_7+\frac{2\tilde{K}_6}{\gamma}$, for all $s\geq -k$. From $\eqref{onuc}$, we get
$$ \|w_{xx}(s)\|^2\leq 2\varphi+2\tilde{K}_7\leq \frac{4\tilde{K}_6}{\gamma}+ 4\tilde{K}_7,$$
for all $s\geq -k$. Thus we get
\begin{align*}
\sup_{s\geq -k} \|w(s)\|_{H^2} \leq \tilde{\mathcal{R}}_2, 
\end{align*}
where 
\begin{align}
\tilde{\mathcal{R}}_2:=\sqrt{\frac{4\tilde{K}_6}{\gamma}+ 4\tilde{K}_7}+\mathcal{R}_0=O(\mu^4), \label{K71} 
\end{align}
as $\mu \to \infty$. Comparing to $\eqref{K61}$, we observe that $\tilde{\mathcal{R}}_2 >> \tilde{\tilde{\mathcal{R}}}_1$, for large $\mu$. 
\subsection{Improved $H^1$ bound}
We now use the $H^2$-bound to obtain a better $H^1$-bound. From $\eqref{phiequation}$ and $\eqref{phi}$, we realize that
\begin{align} 
\frac{d\phi}{ds}+4\gamma \phi+\mu \phi=& 2\gamma \|w_x\|^2+6\gamma Re\int_0^L f\bar{w} -6\mu \gamma Im\int_0^L v\bar{w}\notag \\
&+2\mu Re \int_0^L P_m(|w|^2w)P_m\bar{w}- 2\mu \|P_mw_x\|^2\notag \\
&-2\mu Re \int_0^L P_mfP_m\bar{w}-2\mu Im \int_0^L  v_s \bar{w}\notag \\
&+\mu \|w_x\|^2-\frac{\mu}{2}\|w\|_{L^4}^4+2\mu Re \int_0^L f\bar{w}. \label{phiequation2}
\end{align}
We estimate the right-hand side of $\eqref{phiequation2}$ as follows
$$2\gamma \|w_x\|^2\leq 2\gamma \|w\|_{H^1}^2,$$ 
$$6\gamma Re\int_0^L f\bar{w} \leq 6\gamma \|f\|\|w\| \leq 6\gamma \|f\| \mathcal{R}_0,$$
$$-6\mu \gamma Im\int_0^L  v\bar{w}\leq 6\mu \gamma  \|v\| \|w\|\leq 6\mu \gamma  |v|_X \mathcal{R}_0,$$
\begin{align*}
2\mu Re \int_0^L P_m(|w|^2w)P_m\bar{w}&= 2\mu Re \int_0^L |w|^2wP_m\bar{w}\\
&\leq 2\mu \|w\|_\infty^2 \|w\| \|P_mw\| \\
&\leq 2\mu c (\|w\| \|w\|_{H^1}) \|w\| \|P_mw\| \\
&\leq 2\mu c \|w\|^3 \|w\|_{H^1}  \\
&\leq \mu c^2\|w\|^6+ \mu \|w\|_{H^1}^2 \\
&\leq \mu (c^2\|w\|^6+\|w\|^2)+ \mu \|w_x\|^2 \\
&\leq \mu (c^2\mathcal{R}_0^6+\mathcal{R}_0^2)+ \mu \|w_x\|^2.
\end{align*}
Choose $m$ large enough so that 
\begin{align}
\frac{\tilde{\mathcal{R}}_2^2L^2}{4\pi^2(m+1)^2}\leq 1. \label{E2}
\end{align}
Then, 
\begin{align*}
- 2\mu \|P_mw_x\|^2&=  -2\mu \|w_x\|^2+ 2\mu \|Q_mw_x\|^2\\
&\leq -2\mu \|w_x\|^2+\frac{2\mu}{((m+1)\frac{2\pi}{L})^2}\|w_{xx}\|^2\\
&\leq -2\mu \|w_x\|^2+2\mu,
\end{align*}
\begin{align*}
2\mu Re \int_0^L f\bar{w}-2\mu Re \int_0^L P_mfP_m\bar{w}&\leq 2\mu \|f\| \|Q_mw\|\\
& \leq 2\mu \|f\| \|w\|\\
&\leq 2\mu \|f\| \mathcal{R}_0,
\end{align*} 
$$-2\mu Im \int_0^L  v_s \bar{w}\leq 2\mu  \|v_s\| \|w\|\leq 2\mu  |v|_X\mathcal{R}_0.$$ 
Add the above terms to  obtain 
\begin{align}
\frac{d\phi}{ds}+4\gamma \phi+ \mu \phi \leq 2\gamma \|w\|_{H^1}^2 + K_1, \label{phiinequality}
\end{align}
where
\begin{align*} 
K_1:=  6\gamma \|f\| \mathcal{R}_0&+ 6\mu \gamma |v|_X \mathcal{R}_0+\mu(c^2\mathcal{R}_0^6+\mathcal{R}_0^2)+2\mu+2\mu \|f\| \mathcal{R}_0+2\mu |v|_X\mathcal{R}_0.
\end{align*}
Now, as in $\eqref{phiestimation}$
\begin{align*}
\phi(s)&\geq \|w_x\|^2- \frac{3c^2\|w\|^6}{16}-\frac{1}{3} \|w_x\|^2-\frac{1}{2} c\|w\|^4-2\|f\| \mathcal{R}_0-2\mu  |v|_X \mathcal{R}_0\\
&\geq \frac{2}{3}\|w_x\|^2+ \frac{2}{3}\|w\|^2- \left [\frac{3c^2\mathcal{R}_0^6}{16}+\frac{1}{2}c \mathcal{R}_0^4+2\|f\| \mathcal{R}_0+2\mu  |v|_X \mathcal{R}_0+\frac{2}{3}\mathcal{R}_0^2\right ] \\
&=\frac{2}{3}\|w\|_{H^1}^2-K_2,
\end{align*}
where 
$$K_2:= \frac{3c^2\mathcal{R}_0^6}{16}+\frac{1}{2} c\mathcal{R}_0^4+2\|f\| \mathcal{R}_0+2\mu  |v|_X \mathcal{R}_0+\frac{2}{3}\mathcal{R}_0^2.$$
Thus $\|w\|_{H^1}^2\leq \frac{3}{2}\phi(w)+\frac{3}{2}K_2$. Use this in $\eqref{phiinequality}$ to obtain
\begin{align*}
\frac{d\phi}{ds}+ \gamma \phi +\mu \phi \leq  3\gamma K_2+K_1
\end{align*}
Thus, since $\phi(-k)=0$, by virtue of Gronwall Lemma, we have  
$$\phi(s)\leq \frac{3\gamma K_2+K_1}{\gamma+\mu},$$
for all $s\geq -k$. Thus,
\begin{align}
\sup_{s\geq -k}\|w(s)\|_{H^1}\leq \mathcal{R}_1:=\sqrt{\frac{(\frac{3}{2}\mu+6\gamma)K_2+K_1}{\gamma+\mu}}=O(\mu^{\frac{1}{2}}),\label{K6}
\end{align}
as $\mu \to \infty$. Inserting $\mathcal{R}_1$ in place of $\tilde{\mathcal{R}}_1$ in the proof of the $H^2$-bound, yields new $H^2$-bound 
\begin{align}
\sup_{s\geq -k}\|w(s)\|_{H^2}\leq \mathcal{R}_2=\sqrt{\frac{4K_6}{\gamma}+ 4K_7}+\mathcal{R}_0=O(\mu^2) \label{K10},
\end{align}
as $\mu \to \infty$, where 
\begin{align*}
K_6:=&   2K_5+ \frac{c\mu^2|v|_X^2}{\gamma}+ c\gamma \mu^2|v|_X^2 +c\gamma \|f\|^2+c\mu(\mathcal{R}_0^2\mathcal{R}_1)^2+c\mu \|f\|^2+c\mu^3|v|_X^2,
\end{align*}
$$K_7:= c(\mathcal{R}_0\mathcal{R}_1^3+\|f\|^2+\mu^2|v|_X^2),$$
$$K_5:= \frac{c(\mathcal{R}_0^{\frac{1}{2}}\mathcal{R}_1^2 )^4}{\gamma^3}+\frac{c\{\mathcal{R}_0\mathcal{R}_1[\mathcal{R}_0^2\mathcal{R}_1+ (\gamma+\mu) \mathcal{R}_0+ \mu   |v|_X+\|f\|]\}^2}{\gamma}.$$
\subsection{Time derivative bound}
We realize from $\eqref{Galerkin}$ that we have
\begin{align*}
\|w'(s)\|\leq \mathcal{R}_2+ c\mathcal{R}_0^2\mathcal{R}_1+ (\gamma+\mu)\mathcal{R}_0+\|f\|+\mu|v|_X.
\end{align*}
Thus 
\begin{align*}
\sup_{s\geq -k}\|w'(s)\|\leq \mathcal{R'},
\end{align*}
where
\begin{align}
\mathcal{R'}:= \mathcal{R}_2+ c\mathcal{R}_0^2\mathcal{R}_1+ (\gamma+\mu)\mathcal{R}_0+\|f\|+\mu|v|_X. \label{K11}
\end{align}
\subsection{Passing to the limit}
To summarize our $L^2$, $H^1$, $H^2$ and the time derivative bounds, we have
\begin{equation}\label{bounds}
\begin{aligned}
&\sup_{s\geq -k}\|w_n(s)\|\leq \mathcal{R}_0=O(\mu^0), \\
&\sup_{s\geq -k}\|w_n(s)\|_{H^1}\leq \mathcal{R}_1=O(\mu^{\frac{1}{2}}), \\
&\sup_{s\geq -k}\|w_n(s)\|_{H^2}\leq \mathcal{R}_2=O(\mu^2),\\
&\sup_{s\geq -k}\|w'_n(s)\|\leq \mathcal{R'}=O(\mu^2),
\end{aligned}
\end{equation}
as $\mu \to \infty$, where $\mathcal{R}_0, \mathcal{R}_1$, $\mathcal{R}_2$ and $\mathcal{R'}$ are independent of $k$ and $n$, and defined in $\eqref{K1}$, $\eqref{K6}$, $\eqref{K10}$ and $\eqref{K11}$, respectively, and where $w_n$ is the solution of the initial value problem \eqref{Galerkin}-\eqref{ic}. Thus we have a bounded solution $w_n$  to the Galerkin approximation $\eqref{Galerkin}$ of  the equation $\eqref{main}$ with initial condition $\eqref{ic}$, on the interval $[-k, \infty)$, and satisfying $\eqref{bounds}$; we will call it $w_{n,k}$ to emphasize the initial time $-k$, and consider
\begin{align*}
w_{n,k}\in C_b([-k, \infty); H^2)\cap C_b^1([-k, \infty); L^2).
\end{align*}
We now focus on the interval $[-1,1]$. Since $H_n$, defined in $\eqref{Hm}$, is finite-dimensional, we may invoke the Arzela-Ascoli compactness theorem to extract a subsequence of $w_{n,k}$, denoted by $w_{n,k}^{(1)}$, such that $w_{n,k}^{(1)}\rightarrow w_n^{(1)}$, as $k\rightarrow \infty$, where $w_n^{(1)}$ is a bounded solution of the Galerkin approximation $\eqref{Galerkin}$ on the interval $[-1,1]$. Let $j\in \mathbb{N}$, we will use an induction iterative procedure to define $w_{n,k}^{(j+1)}$ to be subsequence of $w_{n,k}^{(j)}$, all of which are subsequences of $w_{n,k}$. Indeed, we have already defined $w_{n,k}^{(1)}$. Suppose $w_{n,k}^{(j)}$ is defined, and is a subsequence of $w_{n,k}$. We apply again the Arzela-Ascoli compactness theorem to extract a subsequence of $w_{n,k}^{(j)}$, denoted by $w_{n,k}^{(j+1)}$, such that $w_{n,k}^{(j+1)}\rightarrow w_n^{(j+1)}$, as $k\rightarrow \infty$, uniformly on $[-(j+1),(j+1)]$, where $w_{n}^{(j+1)}$ is a bounded solution of the Galerkin approximation \eqref{Galerkin} on the interval $[-(j+1),(j+1)]$. Notice that $w_{n}^{(j)}$ satisfies all the estimates in $\eqref{bounds}$ in the interval $[-j,j]$. By the Cantor diagonal process we have that $w_{n,k}^{(k)} \to w_n$, where $w_n$ is a bounded solution of the Galerkin approximation $\eqref{Galerkin}$ on all of $\mathbb{R}$ satisfying all the estimates above. Thanks to the compact embeddings
\begin{align*}
H^2\hookrightarrow H^1 \hookrightarrow L^2,
\end{align*}
and 
\begin{align*}
\sup_{s\in \mathbb{R}}\|w_{n}\|_{H^2}\leq \mathcal{R}_2 \text{  and  } \sup_{s\in \mathbb{R}}\|w'_{n}\|\leq \mathcal{R'}, 
\end{align*}
we can apply Aubin's compactness theorem (see, e.g., \cite{ConFoias} and \cite{TemamDyn}). For every $m\in \mathbb{N}$ there exist a subsequence $w_{n}^{(m)}$ of $w_n$ such that $w_{n}^{(m)}\rightarrow w^{(m)}$ on the interval $[-m,m]$, where $w^{(m)}$ is a bounded solution of $\eqref{main}$ on the interval $[-m,m]$.  Also,$w^{(m)}$ and $\frac{d}{ds}w^{(m)}$ satisfy estimates $\eqref{bounds}$ on the interval $[-m,m]$. Again by the Cantor diagonal process we have a subsequence $w_n^{(n)} \to w$ where $w$ is a bounded solution of $\eqref{main}$ on all of $\mathbb{R}$. Since $w$ and $w'$ also satisfy $\eqref{bounds}$ for all $s\in \mathbb{R}$, we have the following theorem: 
\newtheorem{existence}{Proposition}[section]
\begin{existence}
Let $v\in X$, where X is defined as in $\eqref{X}$. Then there exists a bounded solution $w\in Y$ of $\eqref{main}$, where $Y$ is defined as in $\eqref{Y}$. 
\end{existence}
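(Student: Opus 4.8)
The plan is to construct the solution by a Galerkin scheme whose initial time is pushed back to $-\infty$, harvesting the uniform a priori bounds already established in Section 4. Concretely, for each truncation level $n>m$ and each initial time $-k$, $k\in\mathbb{N}$, I would consider the finite-dimensional system \eqref{Galerkin} with the zero initial datum \eqref{ic}. Local existence of $w_{n,k}$ is immediate, since \eqref{Galerkin} is an ODE on the finite-dimensional space $H_n$ with locally Lipschitz nonlinearity; the uniform estimates \eqref{bounds}, being independent of both $n$ and $k$, then guarantee that the maximal interval of existence is all of $[-k,\infty)$ and that $w_{n,k}$ is bounded in $L^2$, $H^1$ and $H^2$, with $\partial_s w_{n,k}$ bounded in $L^2$.

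Next I would pass to the limit $k\to\infty$ at fixed $n$. Since $H_n$ is finite dimensional, the $H^2$ bound in \eqref{bounds} supplies uniform boundedness while the $L^2$ bound on $\partial_s w_{n,k}$ supplies equicontinuity, so the Arzela-Ascoli theorem applies on each compact interval $[-j,j]$. A Cantor diagonal argument over the exhausting family $\{[-j,j]\}_{j\in\mathbb{N}}$ then extracts a subsequence converging to a function $w_n$ that solves the Galerkin system \eqref{Galerkin} on all of $\mathbb{R}$ and inherits every bound in \eqref{bounds}. This step is what promotes a forward-in-time trajectory into a complete, two-sided bounded trajectory of the truncated dynamics.

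The second and more delicate limit is $n\to\infty$. Here $H^2$ is not compactly embedded in itself, so Arzela-Ascoli no longer applies; instead I would invoke Aubin's compactness theorem, using the uniform $H^2$ bound on $w_n$ together with the uniform $L^2$ bound on $w_n'$ and the compact embeddings $H^2\hookrightarrow H^1\hookrightarrow L^2$. On each interval $[-m,m]$ this yields a subsequence converging \emph{strongly} in $C([-m,m];H^1)$, and a further diagonalization over the intervals produces a limit $w$ defined on all of $\mathbb{R}$. Strong convergence is exactly what is needed to pass to the limit in the cubic term $|w_n|^2 w_n$, whereas the linear and forcing terms pass by weak-$*$ convergence. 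The limit $w$ then solves the full equation \eqref{main} on $\mathbb{R}$, and since it retains the uniform $H^2$ bound, $\sup_{s\in\mathbb{R}}\|w(s)\|_{H^2}\le\mathcal{R}_2<\infty$, we conclude that $w\in Y$, as claimed.

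The hard part lies not in these compactness arguments, which are routine once the bounds are in hand, but in the a priori estimates themselves, and most acutely in the $H^2$ bound. Because \eqref{NLSmain} is dispersive and merely damped — there is no parabolic term available to absorb the top-order derivative — one cannot close the $H^2$ estimate by a naive energy method. This is precisely why the argument of Section 4 relies on the compound functionals $\phi$ of \eqref{phi} and $\varphi$ of \eqref{varphi}, tailored to the Hamiltonian structure of the Schr\"{o}dinger equation, to extract coercive control of $\|w_{xx}\|$ while the indefinite contributions are reorganized into exact time derivatives and absorbed. Granting those functionals and the resulting bounds \eqref{bounds}, the existence statement follows as the summary of the double limiting procedure described above.
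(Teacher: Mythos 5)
Your proposal follows essentially the same route as the paper: Galerkin truncation with zero data at time $-k$, the uniform bounds \eqref{bounds}, Arzela--Ascoli plus a Cantor diagonal argument to send $k\to\infty$ at fixed $n$, and then Aubin's compactness theorem with a second diagonalization to send $n\to\infty$, the strong convergence handling the cubic term. The argument is correct and no further comparison is needed.
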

Note that conditions ($\ref{E1}$) and ($\ref{E2}$) are only needed to get sharper bounds. Even without these conditions, there exists a bounded solution of $\eqref{main}$. But to have the bounds $\eqref{bounds}$, we need conditions ($\ref{E1}$) and ($\ref{E2}$). 
\newtheorem{cor1}[existence]{Theorem}
\begin{cor1}
Let $w$ be any bounded solution of $\eqref{main}$,  on $\mathbb{R}$, for some $v\in X$. Assume that $m$ is large enough such that both conditions \ref{E1} and \ref{E2} hold i.e,  
\begin{align}
\max\{\frac{\tilde{\tilde{\mathcal{R}}}_1L}{2\pi (m+1)}, \frac{\tilde{\mathcal{R}}_2L}{2\pi (m+1)}\}\leq 1\label{mcondition1},
\end{align}
where $\tilde{\tilde{\mathcal{R}}}_1$, $\tilde{\mathcal{R}}_2$ are defined as in $\eqref{K61}$, $\eqref{K71}$, respectively. Then $w$ satisfies the bounds in $\eqref{bounds}$.
\end{cor1}
\begin{proof} Given that $w$ is a bounded solution of $\eqref{main}$, we can mimic section  4. We integrate the evolution inequalities in that section from $s_0$ to $s$, then take $s_0$ to $-\infty$, to obtain the same bounds. 
\end{proof}
\newtheorem{rk}[existence]{Remark}
\begin{rk}
Assume that $v\in X$ is given.
\begin{enumerate}
\item[ ]
\item $v$ is independent of $\mu$ and $\gamma$. All the estimates we have depend on $|v|_X$. 
\item Observe that $\tilde{\tilde{\mathcal{R}}}_1=O(\mu^\frac{5}{2})$ and $\tilde{\mathcal{R}}_2=O(\mu^4)$ as $\mu \to \infty$. Therefore, condition $\eqref{mcondition1}$ implies that $m\geq O(\mu^4)$. 
\item We note the $\gamma$ dependence on these constant is of the form $\mathcal{R}_0=O(\gamma^{-1}), \mathcal{R}_1=O(\gamma^{-\frac{7}{2}}), \mathcal{R}_2=O(\gamma^{-17}), \text{ and } \mathcal{R'}=O(\gamma^{-17}) \text{ as } \gamma \to 0.$
\item Note that since $\tilde{\tilde{\mathcal{R}}}_1=O(\gamma^{-4})$ and $\tilde{\mathcal{R}}_2=O(\gamma^{-19.5})$ as $\gamma \to 0$, then condition $\eqref{mcondition1}$ implies that $m\geq O(\gamma^{-19.5})$. 
\end{enumerate}
\end{rk}

\section{Main Results}
\newtheorem{w=u}{Proposition}[section] 
\begin{w=u}
Let $u$ be a trajectory on the global attractor of the damped, driven NLS 
\begin{align}
iu_t+u_{xx}+|u|^2u+i\gamma u=f \label{NLS},
\end{align}
and let $w$ be a bounded solution of the equation $\eqref{main}$ with $v=P_mu$. Assume that $\mu$ is large enough so that
\begin{align}
\mathcal{R}_\infty^\frac{1}{2}(\mathcal{R}_\infty^0)^\frac{1}{2}<\mu, \label{mucondition}
\end{align}
holds, where $\mathcal{R}_\infty=c\mathcal{R}_0\mathcal{R}_1$, and $\mathcal{R}_\infty^0=\mathcal{R}_\infty |_{\mu=0}$. In addition, assume that $m$ is large enough such that both $\eqref{mcondition1}$ and 
\begin{align}
\frac{cL^2K_9}{\gamma^2 (m+1)^2}\leq 1 \label{mcondition2}
\end{align}
hold, where $K_9$ is defined in $\eqref{K13}$, and c is a universal constant. Then we have $w\equiv u$.
\end{w=u}
\newtheorem{rk5}[w=u]{Remark}
\begin{rk5}
Note that condition $\eqref{mucondition}$ can be achieved since $\mathcal{R}_\infty^\frac{1}{2}(\mathcal{R}_\infty^0)^\frac{1}{2}=O(\mu^\frac{1}{4})$ as $\mu \to \infty$. We realize that 
$$\frac{cL^2K_9}{2\gamma^2}\leq O(\gamma^{-\frac{383}{12}}),\qquad \frac{cL^2K_9}{2\gamma^2} \leq O(\mu^\frac{35}{12}),$$
as $\gamma \to 0$ and $\mu \to \infty$. Condition $\eqref{mcondition2}$ implies that we need to choose $m$ large enough such that $m\geq O(\gamma^{-\frac{383}{24}})$ and $m\geq O(\mu^\frac{35}{24})$. We note that these conditions are already achieved by $\eqref{mucondition}$, $\eqref{mcondition1}$ and Remark 4.3, up to a constant. 
\end{rk5}
Now we give a proof for Proposition 5.1. 
\begin{proof}
We first mention that all of the bounds that we obtained in section  4 also hold for  the solution of equation $\eqref{NLS}$, with $\mu=0$. Our notation for the time derivative bound and the square of the $L^\infty$ bound for  the solution $u$ will be $\mathcal{R'}^0$ and $\mathcal{R}_\infty^0$ (see \eqref{Kinfty} for $\mathcal{R}_\infty$), respectively. We will use the superscript 0 in $K_j^0$ to denote the constant $K_j$, but with $\mu=0$ in its formula. Taking the difference of the following equations
\begin{align*}
&iw_s+w_{xx}+|w|^2w+i\gamma w=f- i\mu [P_m(w)-u], \\
&iu_s+u_{xx}+|u|^2u+i\gamma u=f,
\end{align*}
we get
\begin{align*}
i\delta_s+\delta_{xx}+|w|^2w-|u|^2u+i\gamma \delta= -i\mu P_m\delta,
\end{align*}\\\
where $\delta:=w-u$. Note that 
\begin{align*}
|w|^2w-|u|^2u&= |\delta|^2\delta+ w\bar{u}\delta+\bar{w}u\delta+wu\bar{\delta}\\
&=|\delta|^2\delta+ 2Re(w\bar{u})\delta+wu\bar{\delta},
\end{align*}\\\
and hence 
\begin{equation}
i\delta_s+\delta_{xx}+|\delta|^2\delta+ 2Re(w\bar{u})\delta+wu\bar{\delta}+i\gamma \delta= -i\mu P_m \delta. \label{attractordelta}
\end{equation}\\\
Multiply $\eqref{attractordelta}$ by $\bar{\delta}$, integrate, and take the real parts, to get
\begin{align}
Im\int_0^L \delta \bar{\delta}_s=\|\delta_x\|^2- \int_0^L|\delta|^4-2\int_0^L Re(w\bar{u})|\delta|^2- Re\int_0^L wu\bar{\delta}^2. \label{imdeltadeltas}
\end{align}
Define $\Phi(s)$ as follows
\begin{align}
\Phi(s)= &\|\delta_x\|^2- \frac{1}{2}\int_0^L|\delta|^4-2\int_0^L Re(w\bar{u})|\delta|^2- Re\int_0^L wu\bar{\delta}^2.  \label{T}
\end{align}
Thus from $\eqref{imdeltadeltas}$, we have 
$$Im\int_0^L \delta \bar{\delta}_s=\Phi(s)- \frac{1}{2}\int_0^L|\delta|^4.$$
Now multiply $\eqref{attractordelta}$ by $P_m\bar{\delta}$, integrate, and take the real parts, to get
\begin{align}
Im\int_0^L P_m\delta P_m\bar{\delta}_s=&\|P_m\delta_x\|^2- Re\int_0^L |\delta|^2\delta P_m\bar{\delta}\notag\\
&-2\int_0^L Re(w\bar{u})Re(\delta P_m\bar{\delta})-Re\int_0^L wu\bar{\delta}P_m\bar{\delta}. \label{imdeltamdeltams}
\end{align}
Multiply $\eqref{attractordelta}$ by $\bar{\delta}_s$, integrate, and take the real parts, to get
\begin{align*}
\frac{d}{ds}\|\delta_x\|^2-\frac{1}{2}\frac{d}{ds}\int_0^L|\delta|^4=&-2\gamma Im\int_0^L \delta \bar{\delta}_s-2\mu Im\int_0^L P_m\delta P_m\bar{\delta}_s \\
&+4\int_0^L Re(w\bar{u})Re(\delta \bar{\delta}_s) +2Re\int_0^L wu\bar{\delta} \bar{\delta}_s.
\end{align*}
We then realize that,
\begin{align*}
\frac{d}{ds}\Phi(s)+2\gamma \Phi(s)= &-2\mu Im\int_0^L P_m\delta P_m\bar{\delta}_s-2\int_0^LRe(w\bar{u}_s)|\delta|^2 \\
&-Re\int_0^L(wu)_s\bar{\delta}^2+\gamma \int_0^L|\delta|^4. 
\end{align*}
Use $\eqref{imdeltamdeltams}$ above to get 
\begin{align*}
\frac{d}{ds}\Phi(s)+2\gamma \Phi(s)=&-2\mu\|P_m\delta_x\|^2+2\mu Re\int_0^L |\delta|^2\delta P_m\bar{\delta}\notag\\
&+4\mu \int_0^L Re(w\bar{u})Re(\delta P_m\bar{\delta})+2\mu Re\int_0^L wu\bar{\delta}P_m\bar{\delta}\notag \\
&-2\int_0^LRe(w\bar{u}_s)|\delta|^2-Re\int_0^L(wu)_s\bar{\delta}^2 \notag \\
&+\gamma \int_0^L|\delta|^4.
\end{align*}
Since condition $\eqref{mcondition1}$ is an assumption of the proposition, we may use the bounds we obtained in section  4. By using Agmon's inequality, along with derivative bound $\eqref{K11}$
and
\begin{align}
\|w\|_\infty^2 \leq c\|w\|\|w\|_{H^1}\leq c(\mathcal{R}_0\mathcal{R}_1):= \mathcal{R}_\infty=O(\mu^\frac{1}{2}), \label{Kinfty} 
\end{align}
as $\mu \to \infty$, so we have
\begin{align*}
 -2\int_0^L Re(w\bar{u}_s)|\delta|^2&\leq 2\|w\|_\infty \|\delta\|_\infty \|u_s\|\|\delta\| \notag \\
 &\leq c\mathcal{R}_\infty^\frac{1}{2}(\|\delta\|^{\frac{1}{2}}\|\delta\|_{H^1}^{\frac{1}{2}})\|u_s\|\|\delta\| \notag \\
 &\leq c\mathcal{R}_\infty^\frac{1}{2}\mathcal{R'}^0(\|\delta\|^{\frac{3}{2}}\|\delta\|_{H^1}^{\frac{1}{2}})  \notag \\
 &\leq c\mathcal{R}_\infty^\frac{1}{2}\mathcal{R'}^0\|\delta\|^2\notag \\
 &+c\mathcal{R}_\infty^\frac{1}{2}\mathcal{R'}^0\|\delta\|^{\frac{3}{2}}\|\delta_x\|^{\frac{1}{2}}\notag \\
 &\leq c[\mathcal{R}_\infty^\frac{1}{2}\mathcal{R'}^0+\frac{1}{\gamma^\frac{1}{3}}\mathcal{R}_\infty^\frac{2}{3}(\mathcal{R'}^0)^\frac{4}{3}]\|\delta\|^2 +\frac{\gamma}{3} \|\delta_x\|^2.
\end{align*}
Similar analysis can be done for the term $-Re\int_0^L (wu)_s\bar{\delta}^2$. Since
\begin{align*}
-Re\int_0^L (wu)_s\bar{\delta}^2= -Re\int_0^L w_su\bar{\delta}^2-Re\int_0^L wu_s\bar{\delta}^2, 
\end{align*}
We have 
\begin{align*}
-Re\int_0^L (wu)_s\bar{\delta}^2\leq & c[(\mathcal{R}_\infty^0)^\frac{1}{2}\mathcal{R'}+\frac{1}{\gamma^\frac{1}{3}}(\mathcal{R}_\infty^0)^\frac{2}{3}(\mathcal{R'})^\frac{4}{3}]\|\delta\|^2 +\frac{\gamma}{3} \|\delta_x\|^2 \\
&+ c[\mathcal{R}_\infty^\frac{1}{2}\mathcal{R'}^0+\frac{1}{\gamma^\frac{1}{3}}\mathcal{R}_\infty^\frac{2}{3}(\mathcal{R'}^0)^\frac{4}{3}]\|\delta\|^2 +\frac{\gamma}{3} \|\delta_x\|^2.
\end{align*}
We also have
\begin{align*}
2\mu Re\int_0^L |\delta|^2\delta P_m\bar{\delta}&\leq 2\mu \|\delta\|_\infty^2\|\delta\|\|P_m\delta\|\notag \\
&\leq 4\mu (\mathcal{R}_\infty+ \mathcal{R}_\infty^0) \|\delta\|\|P_m\delta\| \notag \\
&\leq  4\mu (\mathcal{R}_\infty+ \mathcal{R}_\infty^0) \|\delta\|^2.
\end{align*}
After similar treatment of the terms $4\mu \int_0^L Re(w\bar{u})Re(\delta P_m\bar{\delta})$, \\$2\mu Re\int_0^L wu\bar{\delta}P_m\bar{\delta}$ and $\gamma \int_0^L|\delta|^4$, we obtain 
\begin{align*}
4\mu \int_0^L Re(w\bar{u})Re(\delta P_m\bar{\delta})\leq 4\mu (\mathcal{R}_\infty)^\frac{1}{2} (\mathcal{R}_\infty^0)^\frac{1}{2}\|\delta\|^2,
\end{align*}
\begin{align*}
2\mu Re\int_0^L wu\bar{\delta}P_m\bar{\delta}\leq 2\mu (\mathcal{R}_\infty)^\frac{1}{2} (\mathcal{R}_\infty^0)^\frac{1}{2}\|\delta\|^2,
\end{align*}
\begin{align*}
\gamma \int_0^L|\delta|^4\leq \gamma \|\delta\|_\infty^2\|\delta\|^2\leq 2\gamma(\mathcal{R}_\infty+\mathcal{R}_\infty^0)\|\delta^2\|. 
\end{align*}
Combine the above terms, to obtain
\begin{align}
\frac{d}{ds}\Phi(s)+2\gamma \Phi(s)+2\mu \|P_m\delta_x\|^2\leq cK_8 \|\delta\|^2+ \gamma \|\delta_x\|^2, \label{Tinequality1}
\end{align}
where
\begin{align*}
K_8:=&\mu (\mathcal{R}_\infty+\mathcal{R}_\infty^0)+\mu ((\mathcal{R}_\infty)^\frac{1}{2} (\mathcal{R}_\infty^0)^\frac{1}{2})+(\mathcal{R}_\infty^0)^\frac{1}{2}\mathcal{R'}+\frac{1}{\gamma^\frac{1}{3}}(\mathcal{R}_\infty^0)^\frac{2}{3}(\mathcal{R'})^\frac{4}{3} \\
&+\mathcal{R}_\infty^\frac{1}{2}\mathcal{R'}^0+\frac{1}{\gamma^\frac{1}{3}}\mathcal{R}_\infty^\frac{2}{3}(\mathcal{R'}^0)^\frac{4}{3}. 
\end{align*}
We realize that $K_8=O(\mu^\frac{8}{3})$ as $\mu \to \infty$. Also, from $\eqref{T}$
\begin{align*}
\Phi(s)&\geq \|\delta_x\|^2- c(\mathcal{R}_\infty+\mathcal{R}_\infty^0+\mathcal{R}_\infty^\frac{1}{2}(\mathcal{R}_\infty^0)^\frac{1}{2}) \|\delta\|^2, 
\end{align*}
and hence 
\begin{align}
\|\delta_x\|^2\leq \Phi(s)+ c(\mathcal{R}_\infty+\mathcal{R}_\infty^0+\mathcal{R}_\infty^\frac{1}{2}(\mathcal{R}_\infty^0)^\frac{1}{2}) \|\delta\|^2.  \label{Tinequality2}
\end{align}
Using $\eqref{Tinequality1}$ and $\eqref{Tinequality2}$, we conclude that 
\begin{align*}
\frac{d}{ds}\Phi(s)+2\gamma \Phi(s)\leq c[K_8+\gamma (\mathcal{R}_\infty+\mathcal{R}_\infty^0+\mathcal{R}_\infty^\frac{1}{2}(\mathcal{R}_\infty^0)^\frac{1}{2})]\|\delta\|^2+ \gamma \Phi(s),
\end{align*}
so
\begin{align*}
\frac{d}{ds}\Phi(s)+\gamma \Phi(s)\leq c[K_8+\gamma (\mathcal{R}_\infty+\mathcal{R}_\infty^0+\mathcal{R}_\infty^\frac{1}{2}(\mathcal{R}_\infty^0)^\frac{1}{2})]\|\delta\|^2.
\end{align*}
So we have 
$$\Phi(s)\leq \frac{c[K_8+\gamma (\mathcal{R}_\infty+\mathcal{R}_\infty^0+\mathcal{R}_\infty^\frac{1}{2}(\mathcal{R}_\infty^0)^\frac{1}{2})]}{\gamma}\sup_{s\in\mathbb{R}}\|\delta(s)\|^2.$$
Thus, 
\begin{align}
\|\delta_x\|^2\leq  \frac{c[K_8+\gamma (\mathcal{R}_\infty+\mathcal{R}_\infty^0+\mathcal{R}_\infty^\frac{1}{2}(\mathcal{R}_\infty^0)^\frac{1}{2})]}{\gamma}\sup_{s\in\mathbb{R}}\|\delta(s)\|^2. \label{inverseP}
\end{align}
The inequality $\eqref{inverseP}$ is a 'reverse` Poincar\'e type inequality. From $\eqref{l2inequality}$,
\begin{align*}
\frac{d}{ds}\|\delta\|^2&+2\gamma \|\delta\|^2+2\mu \|P_m\delta\|^2\leq 2\|w\|_\infty \|u\|_\infty \|\delta\|^2\\
&\leq 2\mathcal{R}_\infty^\frac{1}{2}(\mathcal{R}_\infty^0)^\frac{1}{2} \|\delta\|^2\\
&= 2\mathcal{R}_\infty^\frac{1}{2}(\mathcal{R}_\infty^0)^\frac{1}{2} \|P_m\delta\|^2+2\mathcal{R}_\infty^\frac{1}{2}(\mathcal{R}_\infty^0)^\frac{1}{2} \|Q_m\delta\|^2\\
&\leq 2\mathcal{R}_\infty^\frac{1}{2}(\mathcal{R}_\infty^0)^\frac{1}{2} \|P_m\delta\|^2+\frac{2\mathcal{R}_\infty^\frac{1}{2}(\mathcal{R}_\infty^0)^\frac{1}{2}}{((m+1)\frac{2\pi}{L})^2} \|\delta_x\|^2 \\
&\leq 2\mathcal{R}_\infty^\frac{1}{2}(\mathcal{R}_\infty^0)^\frac{1}{2}\|P_m\delta\|^2+ \frac{cL^2K_9}{\gamma (m+1)^2}\sup_{s\in\mathbb{R}}\|\delta(s)\|^2,
\end{align*}
where 
\begin{align}
K_9:=\mathcal{R}_\infty^\frac{1}{2}(\mathcal{R}_\infty^0)^\frac{1}{2}[K_8+\gamma (\mathcal{R}_\infty+\mathcal{R}_\infty^0+\mathcal{R}_\infty^\frac{1}{2}(\mathcal{R}_\infty^0)^\frac{1}{2})]. \label{K13}
\end{align}
Thus, if we choose $\mu$ large enough so that $c\mathcal{R}_\infty^\frac{1}{2}(\mathcal{R}_\infty^0)^\frac{1}{2}=O(\mu^\frac{1}{4})\leq 2\mu$ (which is the condition $\eqref{mucondition}$), we get
\begin{align*}
\frac{d}{ds}\|\delta\|^2+2\gamma \|\delta\|^2\leq\frac{cL^2K_9}{\gamma (m+1)^2}\sup_{s\in\mathbb{R}}\|\delta(s)\|^2,
\end{align*}
and hence 
\begin{align*}
\sup_{s\in\mathbb{R}}\|\delta(s)\|^2\leq \frac{cL^2K_9}{2\gamma^2 (m+1)^2}\sup_{s\in\mathbb{R}}\|\delta(s)\|^2.
\end{align*}
Since $m$ is chosen to be large enough satisfying condition $\eqref{mcondition2}$, we conclude that $\sup_{s\in\mathbb{R}}\|\delta(s)\|^2=0$. Thus $\delta\equiv0$. This implies that $w\equiv u$. 
\end{proof}
\newtheorem{W1}[w=u]{Theorem} \label{lipschitz}
\begin{W1}
Let $v\in B_\rho:=\{v\in X; |v|_X\leq \rho \}$ for some positive $\rho$. Assume that 
\begin{align}
c(\mathcal{R}_0\mathcal{R}_1)<\mu, \label{mucondition2}
\end{align}
holds, and for such $\mu$, conditions $\eqref{mcondition1}$ and 
\begin{align}
\frac{cL^2K_{10}}{\gamma^2 (m+1)^2}\leq 1, \label{mcondition3}
\end{align}
hold, where $K_{10}$ is defined in \eqref{K14}. Then the map $W: X\rightarrow Y$, where $W(v):=w$ is a bounded solution of $\eqref{main}$ provided by Proposition 4.1, is well-defined, and $P_mW: X\rightarrow X$ is a locally Lipschitz function with Lipschitz constant $L_W(\rho)$ given in $\eqref{LW}$. 
\end{W1}
\newtheorem{rknew}[w=u]{Remark}
\begin{rknew} 
\begin{enumerate} 
\item[ ]
\item Recall that $\mathcal{R}_0$, $\mathcal{R}_1$, $\mathcal{R'}$, $\mathcal{R}_\infty$ depend on $|v|_X$ which is controlled by $\rho$. Also note that condition $\eqref{mucondition2}$ can be achieved since $c(\mathcal{R}_0\mathcal{R}_1)=O(\mu^\frac{1}{2})$ as $\mu \to \infty$.
\item Notice that $L_W(\rho)=O(\mu^\frac{3}{2})$ as $\mu \to \infty$, $L_W(\rho)=O(\gamma^{-\frac{9}{2}})$ as $\gamma \to 0$ and $L_W(\rho)$ can be bounded independent of $m$.
\item We note that 
$$\frac{cL^2K_{10}}{2\gamma^2}\leq O(\gamma^{-\frac{65}{2}}),\qquad \frac{cL^2K_{10}}{2\gamma^2} \leq O(\mu^\frac{7}{2}),$$
as $\gamma \to 0$ and $\mu \to \infty$. Condition $\eqref{mcondition3}$ implies that we need to choose $m$ large enough such that $m\geq O(\gamma^{-\frac{65}{4}})$ and $m\geq O(\mu^\frac{7}{4})$. These conditions are already achieved by $\eqref{mucondition2}$, $\eqref{mcondition1}$ and Remark 4.3, up to a constant. 
\end{enumerate}
\end{rknew}
Now we give the proof of Theorem 5.3. 
\begin{proof}
Note that all constants $\mathcal{R}_0, \mathcal{R}_1, \mathcal{R}_2, \mathcal{R'}$ and $\mathcal{R}_\infty$ depend on $|v|_X$. So, since we are in a ball $B_\rho \subset X$, all of these constants will depend on $\rho$. Let $v, \tilde{v}\in B_\rho$ such that $W(v)=w$ and $W(\tilde{v})=\tilde{w}$.  Since $w$ and $\tilde{w}$ are the solutions of the equation $\eqref{main}$ for $v$ and $\tilde{v}$ , respectively , the following hold:
$$iw_s+w_{xx}+|w|^2w+i\gamma w=f- i\mu [P_m(w)-v],$$
$$i\tilde{w}_s+\tilde{w}_{xx}+|\tilde{w}|^2\tilde{w}+i\gamma \tilde{w}=f- i\mu [P_m(\tilde{w})-\tilde{v}].$$
Subtract, denoting $\delta:=w-\tilde{w}$ and $\eta:=v-\tilde{v}$, to obtain 
\begin{equation}
i\delta_s+\delta_{xx}+|\delta|^2\delta+2Re(w\bar{\tilde{w}})\delta+w\tilde{w}\bar{\delta}+i\gamma \delta+i\mu P_m\delta= i\mu \eta \label{deltaeta}.
\end{equation}\\\\
Multiply $\eqref{deltaeta}$ by $\bar{\delta}$, integrate, and take the real parts, to get
\begin{align*}
Im\int_0^L \delta \bar{\delta}_s=&\|\delta_x\|^2- \int_0^L |\delta|^4-2\int_0^L Re(w\bar{\tilde{w}})|\delta|^2\notag \\
&- Re\int_0^L w\tilde{w}\bar{\delta}^2-\mu Im\int_0^L \eta\bar{\delta}.
\end{align*}
Define $\Psi(s)$ as follows
\begin{align}
\Psi(s)= &\|\delta_x\|^2- \frac{1}{2}\int_0^L |\delta|^4-2\int_0^L Re(w\bar{\tilde{w}})|\delta|^2 \notag \\
&- Re\int_0^L w\tilde{w}\bar{\delta}^2-\mu Im\int_0^L \eta\bar{\delta}.  \label{Psi}
\end{align}
Now multiply $\eqref{deltaeta}$ by $P_m\bar{\delta}$, integrate, and take the real parts, to get

\begin{align}
Im\int_0^L P_m\delta P_m\bar{\delta}_s=&\|P_m\delta_x\|^2- Re\int_0^L |\delta|^2\delta P_m\bar{\delta} \notag \\
&-2\int_0^L Re(w\bar{\tilde{w}})Re(\delta P_m\bar{\delta})- Re\int_0^L w\tilde{w}\bar{\delta}P_m\bar{\delta} \notag \\
&-\mu Im\int_0^L \eta P_m\bar{\delta}. \label{imdeltamdeltamseta}
\end{align}

Multiply $\eqref{deltaeta}$ by $\bar{\delta}_s$, integrate, and take the real parts, to get
\begin{align*}
\frac{d}{ds}\|\delta_x\|^2-\frac{1}{2}\frac{d}{ds}\int_0^L|\delta|^4=&-2\gamma Im\int_0^L \delta \bar{\delta}_s-2\mu Im\int_0^L P_m\delta P_m\bar{\delta}_s \notag \\
&+4\int_0^L Re(w\bar{\tilde{w}})Re(\delta \bar{\delta}_s) +2Re\int_0^L w\tilde{w}\bar{\delta} \bar{\delta}_s\\
&+ \mu Im\int_0^L \eta \bar{\delta}_s.
\end{align*}
We then realize that
\begin{align*}
\frac{d}{ds}\Psi(s)+2\gamma \Psi(s)= &-2\mu Im\int_0^L P_m\delta P_m\bar{\delta}_s-2\int_0^LRe(w\bar{\tilde{w}})_s|\delta|^2 \\
&-Re\int_0^L(w\tilde{w})_s\bar{\delta}^2+\gamma \int_0^L|\delta|^4 \\ 
&-\mu Im\int_0^L \eta_s\bar{\delta}-2\gamma \mu Im\int_0^L \eta\bar{\delta}. 
\end{align*}
Use $\eqref{imdeltamdeltamseta}$ above to get
\begin{align*}
\frac{d}{ds}\Psi(s)+2\gamma \Psi(s)=&-2\mu\|P_m\delta_x\|^2+2\mu Re\int_0^L |\delta|^2\delta P_m\bar{\delta}\notag\\
&+4\mu \int_0^L Re(w\bar{u})Re(\delta P_m\bar{\delta})\\
&+2\mu Re\int_0^L wu\bar{\delta}P_m\bar{\delta}-2\int_0^LRe(w\bar{u}_s)|\delta|^2\\
&-Re\int_0^L(wu)_s\bar{\delta}^2+\gamma \int_0^L|\delta|^4\\
&-\mu Im\int_0^L \eta_s\bar{\delta}-2\gamma \mu Im\int_0^L \eta \bar{\delta}.
\end{align*} 
We estimate as before to obtain
\begin{align*}
\|\delta_x\|^2\leq \frac{(c(\mu+\gamma) \mathcal{R}_\infty+c\gamma^{-\frac{1}{3}}\mathcal{R}_\infty^\frac{2}{3}\mathcal{R'}^\frac{4}{3})}{\gamma}\sup_{s\in\mathbb{R}}\|\delta(s)\|^2+(\mu+3\gamma \mu) |\eta|_X \sup_{s\in\mathbb{R}}\|\delta(s)\| .
\end{align*}
Mulitply $\eqref{deltaeta}$ with $\bar{\delta}$, integrate, and take the imaginary parts to obtain 
$$\frac{d}{ds}\|\delta\|^2+2\gamma \|\delta\|^2+2\mu \|P_m\delta\|^2=2\mu Re\int_0^L \eta \bar{\delta}- 2 Im\int_0^L w\tilde{w}\bar{\delta}^2 .$$
We make similar estimates again, and take advantage of the condition $\eqref{mucondition2}$, to get  
\begin{align*}
\frac{d}{ds}\|\delta\|^2+2\gamma \|\delta\|^2\leq & \frac{cL^2K_{10}}{\gamma (m+1)^2}\sup_{s\in\mathbb{R}}\|\delta(s)\|^2\\ 
&+ \frac{c\mathcal{R}_\infty(\mu+3\gamma \mu)}{(\frac{2\pi}{L})^2(m+1)^2}|\eta|_X \sup_{s\in\mathbb{R}}\|\delta(s)\|\;,
\end{align*}
where 
\begin{align}
K_{10}=\mathcal{R}_\infty[(\mu+\gamma) \mathcal{R}_\infty+\gamma^{-\frac{1}{3}}\mathcal{R}_\infty^\frac{2}{3}\mathcal{R'}^\frac{4}{3}].\label{K14}
\end{align}
By $\eqref{mcondition3}$ we have  
\begin{align}
\sup_{s\in\mathbb{R}}\|\delta(s)\| \leq \frac{c\mathcal{R}_\infty(\mu+3\gamma \mu)}{(\frac{2\pi}{L})^2(m+1)^2 \gamma}|\eta|_X. \label{welldefined}
\end{align}
Note that $\eqref{welldefined}$ implies that the $W$-map is well-defined. Now, 
\begin{align}
\|P_m\delta_{xx}\|\leq m^2\|P_m\delta\|\leq m^2\|\delta\|\leq  \frac{c\mathcal{R}_\infty(\mu+3\gamma \mu)}{(\frac{2\pi}{L})^2 \gamma}|\eta|_X \label{lipl2}.
\end{align}
From $\eqref{deltaeta}$ and $\eqref{lipl2}$, 
\begin{align*}
\|P_m\delta_s\|&\leq \|P_m\delta_{xx}\|+\|P_m(|w|^2\delta+w\tilde{w}\bar{\delta}+|\tilde{w}|^2\delta)\|+(\gamma+\mu)\|P_m\delta\|+\mu\|\eta\| \notag \\
&\leq m^2 \|\delta\|+ c\mathcal{R}_\infty \|\delta\|+ (\gamma+\mu)\|\delta\|+\mu\|\eta\| \notag \\
&\leq ((m^2+ c\mathcal{R}_\infty+ \gamma+\mu)(\frac{c\mathcal{R}_\infty(\mu+3\gamma \mu)}{(\frac{2\pi}{L})^2(m+1)^2 \gamma})+\mu) |\eta|_X,
\end{align*}
so that
\begin{align*}
\|P_m\delta\|+\|P_m\delta_{s}\|\leq L_W |\eta|_X,
\end{align*}
where 
\begin{align}
L_W:=(m^2+ c\mathcal{R}_\infty+ \gamma+\mu+1)(\frac{c\mathcal{R}_\infty(\mu+3\gamma \mu)}{(\frac{2\pi}{L})^2(m+1)^2 \gamma})+\mu. \label{LW}
\end{align}
Thus, 
$$|P_m\delta |_X \leq L_W|\eta|_X,$$
i.e, 
$$|P_mW(v)-P_mW(\tilde{v})|_X\leq L_W |v-\tilde{v}|_X.$$
\end{proof}

\section{The determining form}
For every trajectory $u$ in the global attractor, $\mathcal{A}$, we have 
\begin{align*}
|u|_X\leq R, 
\end{align*}
where
\begin{align} 
R:= \mathcal{R}_0^0+\mathcal{R'}^0, \label{R}
\end{align}
with $\mathcal{R}_0^0=\mathcal{R}_0|_{\mu=0}$ and $\mathcal{R'}^0=\mathcal{R'}|_{\mu=0}$. Let $u^*$ be a steady state of the damped, driven NLS $\eqref{NLSmain}$. Adapting the suggestion given in \cite{Form2}, we propose the following  \emph{determining form} for the damped-driven NLS:
\begin{align}
\frac{dv}{dt}= -|v-P_mW(v)|_{X,0}^2 (v- P_mu^*), \label{newform}
\end{align}
where $|\cdot|_{X,0}$ is defined in $\eqref{X0}$. The specific conditions on $m$ and its dependence on $R$ to guarantee the existence of a Lipschitz map $P_mW(v)$ are stated in the Theorem 6.1 below. 
\newtheorem{Formthm}{Theorem}[section] 
\begin{Formthm}
Suppose that the conditions of Theorem 5.3 hold for $\rho=4R$, where $R$ is defined in $\eqref{R}$. 
\begin{enumerate}
\item{The vector field in the determining form $\eqref{newform}$ is a Lipschitz map from the ball $\mathcal{B}_X^\rho(0)= \{v\in X: |v|_X< \rho\}$ into $X$. Thus, $\eqref{newform}$ is actually an ODE, in the Banach space X, which has a short time existence and uniqueness for the initial data in $\mathcal{B}_X^\rho(0)= \{v\in X: |v|_X< \rho\}$. }
\item{The ball $\mathcal{B}_X^{3R}(P_mu^*)= \{v\in X: |v-P_mu^*|_X< 3R\} \subset \mathcal{B}_X^\rho(0)$ is forward invariant in time, under the dynamics of the determining form $\eqref{newform}$. Consequently, $\eqref{newform}$ has global existence and uniqueness for all initial data in $\mathcal{B}_X^{3R}(P_mu^*)$.}
\item{Every solution of  the determining form $\eqref{newform}$, with initial data in $\mathcal{B}_X^{3R}(P_mu^*)$, converges to a steady state of the determining form $\eqref{newform}$. }
\item{All the steady states of the determining form, $\eqref{newform}$, that are contained in the ball $\mathcal{B}_X^\rho(0)$ are given by the form $v(s)=P_mu(s)$, for all $s\in \mathbb{R}$, where $u(s)$ is a trajectory that lies on the global attractor, $\mathcal{A}$, of $\eqref{NLSmain}$.}
\end{enumerate}
\end{Formthm}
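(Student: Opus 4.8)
The plan is to read all four items off the special algebraic structure of the vector field, which was engineered so that the component of $v$ transverse to the ray through $P_mu^*$ is frozen. Throughout write $V(t):=v(t)-P_mu^*$ and $g(v):=|v-P_mW(v)|_{X,0}^2\ge 0$, so that the determining form $\eqref{newform}$ reads $\frac{d}{dt}V=-g(v)\,V$. For item (1), I would show $F$ is Lipschitz on $\mathcal{B}_X^\rho(0)$ by decomposing, for $v,\tilde v$ in the ball,
$$F(v)-F(\tilde v)=-g(v)\,(v-\tilde v)-\bigl(g(v)-g(\tilde v)\bigr)(\tilde v-P_mu^*).$$
The scalar $g$ is bounded on the ball because the a priori bounds of Section 4 control $|P_mW(v)|_X$, so the first term is $O(|v-\tilde v|_X)$. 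For the second term I would use $|a|^2-|b|^2=(|a|+|b|)(|a|-|b|)$ with $a=v-P_mW(v)$, $b=\tilde v-P_mW(\tilde v)$, together with $\bigl||a|_{X,0}-|b|_{X,0}\bigr|\le|a-b|_X\le(1+L_W)\,|v-\tilde v|_X$ from the Lipschitz bound $\eqref{LW}$ on $P_mW$, and the fact that $|\tilde v-P_mu^*|_X$ is bounded on the ball. This yields a global Lipschitz constant on $\mathcal{B}_X^\rho(0)$, and the Picard--Lindel\"of theorem in the Banach space $X$ gives short-time existence and uniqueness.

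Item (2) is the crux and follows from the monotonicity built into $\frac{d}{dt}V=-g(v)V$. Since $g\ge 0$, the scalar $\alpha(t):=\exp\bigl(-\int_0^t g(v(\tau))\,d\tau\bigr)$ is non-increasing with $\alpha(0)=1$, and by uniqueness $V(t)=\alpha(t)\,V(0)$; in particular $|v(t)-P_mu^*|_X=\alpha(t)\,|v(0)-P_mu^*|_X$ is non-increasing. Hence $\mathcal{B}_X^{3R}(P_mu^*)$ is forward invariant, and since $|P_mu^*|_X\le|u^*|_X\le R$ we have $\mathcal{B}_X^{3R}(P_mu^*)\subset\mathcal{B}_X^{4R}(0)=\mathcal{B}_X^\rho(0)$; the solution therefore never leaves the region where $F$ is Lipschitz, giving global existence and uniqueness.

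For item (3), the same identity $v(t)=P_mu^*+\alpha(t)\,V(0)$ makes convergence immediate: $\alpha(t)\downarrow\alpha_\infty\in[0,1]$, so $v(t)\to v_\infty:=P_mu^*+\alpha_\infty V(0)$ in $X$. It remains to check $v_\infty$ is a steady state. If $\alpha_\infty=0$ then $v_\infty=P_mu^*$, and $F(P_mu^*)=0$ because $W(P_mu^*)=u^*$ (Proposition 5.1) forces $g(P_mu^*)=0$. If $\alpha_\infty>0$ then $\int_0^\infty g(v(\tau))\,d\tau<\infty$; since $g$ is continuous on $X$ (as $P_mW$ is Lipschitz and $|\cdot|_{X,0}\le|\cdot|_X$) and $g(v(\tau))\to g(v_\infty)$, the finiteness of the integral of a non-negative convergent integrand forces $g(v_\infty)=0$, whence $F(v_\infty)=0$. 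Either way the limit is a steady state.

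Finally, for item (4), a steady state $v\in\mathcal{B}_X^\rho(0)$ satisfies $g(v)(v-P_mu^*)=0$, i.e. either $v=P_mu^*$ or $v=P_mW(v)$; the first is a special case of the second since $W(P_mu^*)=u^*$. Setting $w:=W(v)$, the relation $P_mw=v$ makes the feedback term $-i\mu[P_m(w)-v]$ in $\eqref{main}$ vanish identically, so $w$ solves the NLS $\eqref{NLSmain}$ and is bounded on all of $\mathbb{R}$ in $H^2$; by the characterization of $\mathcal{A}$ as the union of complete bounded trajectories, $w=u$ lies on the attractor and $v=P_mw=P_mu$. Conversely, for any $u(\cdot)\subset\mathcal{A}$ Proposition 5.1 gives $W(P_mu)=u$, so $P_mW(P_mu)=P_mu$ and $P_mu$ is a steady state lying in the ball. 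The main obstacle is not any single estimate but organizing items (1)--(3) around the scalar ODE $\frac{d}{dt}V=-g(v)V$: once the direction of $V$ is seen to be frozen, invariance, global existence, and convergence are all consequences of the monotonicity of $\alpha(t)$, and item (4) reduces to the vanishing-feedback identity together with Proposition 5.1.
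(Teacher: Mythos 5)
Your proposal is correct and follows essentially the same route the paper sketches: item (1) from the local Lipschitz property of $P_mW$, items (2)--(3) from the dissipative structure $\frac{d}{dt}(v-P_mu^*)=-g(v)(v-P_mu^*)$ with the explicit decreasing factor $\alpha(t)$, and item (4) from the vanishing of the feedback term when $v=P_mW(v)$ together with Proposition 5.1 and the characterization of $\mathcal{A}$ by complete bounded orbits. The paper's own proof is only a four-line outline deferring the details to the reference [Form2]; your write-up supplies exactly those details.
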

\begin{proof}
We use the fact that $P_mW$ is a locally Lipschitz map to prove item (1) above. For item (2) and (3), we use dissipative property of $\eqref{newform}$. To prove item (4), we realize that the right-hand side of $\eqref{newform}$ is zero when either $v=P_mu^*$ or $v=P_mw$. In either case, we show that $v(s)=P_mu(s)$, for all $s\in \mathbb{R}$, where $u(s)$ is a trajectory that lies on the global attractor, $\mathcal{A}$, of $\eqref{NLSmain}$. For details see \cite{Form2}. 
\end{proof}

\section{A new proof of the determining modes property}
Here we give the proof of Theorem 3.1:  
\begin{proof}
We assume $u(s)$ and $\tilde{u}(s)$ are trajectories on the global attractor, $\mathcal{A}$, of $\eqref{NLSmain}$, and  $P_m(u(s))=P_m(v(s))$ for all time $s\in \mathbb{R}$, and for some $m\in \mathbb{N}$ to be chosen later. Then, 
$$iu_s+u_{xx}+|u|^2u+i\gamma u=f,$$
$$i\tilde{u}_s+\tilde{u}_{xx}+|\tilde{u}|^2\tilde{u}+i\gamma \tilde{u}=f.$$
Subtract, denoting $\delta:=u-\tilde{u}$, to obtain 
\begin{equation}
i\delta_s+\delta_{xx}+|\delta|^2\delta+2Re(u\bar{\tilde{u}})\delta+u\tilde{u}\bar{\delta}+i\gamma \delta=0, \notag 
\end{equation}
which is precisely (5.4), but with $\mu=0$, $w$ replaced by $u$, and $\bar{u}$ replaced by $\bar{\tilde{u}}$. Following the proof of Theorem 5.1, we obtain the analog of (5.13) with $\mu=0$: 
\begin{align*}
\|\delta_x(s)\|\leq K_{11}\sup_{s\in \mathbb{R}}\|\delta(s)\|, 
\end{align*}
where
\begin{align}\label{K15}
K_{11}=\sqrt{\frac{(c\gamma \mathcal{R}_\infty^0+c\gamma^{-\frac{1}{3}}(\mathcal{R}_\infty^0)^\frac{2}{3}(\mathcal{R'}^0)^\frac{4}{3})}{\gamma}}\;.
\end{align}
Then, since $P_m\delta=0$, we have
 \begin{align*}
 \|Q_m\delta\|\leq \frac{L}{2\pi (m+1)}\|\delta_x\|&\leq \frac{L}{2\pi (m+1)}K_{11}\sup_{s\in \mathbb{R}}\|\delta\| \\
 &=\frac{L}{2\pi (m+1)}K_{11}\sup_{s\in \mathbb{R}}\|Q_m\delta\|.
 \end{align*}
 Thus, if we choose 
 \begin{align*}
 m\geq \frac{L}{2\pi}K_{11}-1,
 \end{align*}
 we obtain that $Q_m\delta=0$. As a result, $u(s)=\tilde{u}(s)$.
 \end{proof}
\newtheorem{Rk7}{Remark}[section]
\begin{Rk7} 
\begin{enumerate}
\item[ ]
\item By tracking the $\|f\|$ and $\gamma$ dependence of the bounds throughout the paper, we have that   $\mathcal{R}_0=O(\|f\|, \gamma^{-1}) \text{,   } \mathcal{R}_0^0=O(\|f\|, \gamma^{-1}), \mathcal{R}_1=O(\|f\|^3, \gamma^{-3.5}), \\\mathcal{R}_1^0=O(\|f\|^3, \gamma^{-3}), \mathcal{R}_2=O(\|f\|^{13}, \gamma^{-17}), \mathcal{R}_2^0=O(\|f\|^{13}, \gamma^{-15}), \\\mathcal{R'}=O(\|f\|^{13},\gamma^{-17}),\mathcal{R'}^0=O(\|f\|^{13}, \gamma^{-15}), \mathcal{R}_\infty=O(\|f\|^{4}, \gamma^{-4.5}),\\\mathcal{R}_\infty^0=O(\|f\|^{4}, \gamma^{-4}),  K_8=O(\|f\|^{20}, \gamma^{-\frac{77}{3}}), K_8^0=O(\|f\|^{20}, \gamma^{-23}),\\K_9=O(\|f\|^{24}, \gamma^{-\frac{359}{12}}), K_9^0=O(\|f\|^{24}, \gamma^{-27}), K_{10}=O(\|f\|^{24}, \gamma^{-\frac{61}{2}}), \\K_{10}^0=O(\|f\|^{24}, \gamma^{-27})$, as $\|f\| \to \infty$ and $\gamma \to 0$. 
\item Since $\mathcal{R'}^0=O(\|f\|^{13})$ and $\mathcal{R}_\infty^0=O(\|f\|^4)$, from $\eqref{K15}$ we have $K_{11}=O(\|f\|^{10})$ as $\|f\| \to \infty$. Thus, from $\eqref{numberofmodes}$, a sufficient number of determining modes is of order $m=O(\|f\|^{10})$. 
\item Similarly, since $\mathcal{R'}^0=O(\gamma^{-15})$ and $\mathcal{R}_\infty^0=O(\gamma^{-4})$, we have $K_{11}=O(\gamma^{-12})$ as $\gamma \to 0$. Thus a sufficient number of determining modes is of order $m=O(\gamma^{-12})$. 
\item  Following the analysis in the earlier work of Goubet \cite{Goubet}, one can show that a sufficient number of the determining modes is of order $O(\gamma^{-12.5})$ as $\gamma \to 0$ and $O(\|f\|^{12})$ as $\|f\| \to \infty$.   
\end{enumerate}
\end{Rk7}

\section{Acknowledgements}

E.S.T. would like to acknowledge the kind hospitality of the Instituto Nacional de Matem\' {a}tica  Pura  e Aplicada (IMPA), Brazil, where part of this work was completed. The work of M.J. is supported in part by National Science Foundation (NSF) Grant Numbers DMS-1008661 and DMS-1109638, and that of E.S.T.  is supported in part by the NSF grants DMS-1009950, DMS-1109640 and DMS-1109645, as well as by the CNPq-CsF grant \# 401615/2012-0, through the program Ci\^encia sem Fronteiras.\\\\

\end{document}